\documentclass[a4paper,11pt,reqno]{amsart}

\usepackage[T1]{fontenc}
\usepackage[utf8]{inputenc}
\usepackage{tikz}
\usepackage[british]{babel}
\usepackage[shortlabels]{enumitem}
\usepackage{amsmath, amsthm, amssymb, amsfonts}
\usepackage{mathtools}
\usepackage{thmtools,thm-restate}
\usepackage{hyperref}
\usepackage{stmaryrd}
\usepackage{bbm}

\usepackage{geometry}
\geometry{
  a4paper,
  left=20mm,
  top=20mm,
  total={170mm,257mm}
}
\linespread{1.1}

\declaretheorem[name=Theorem]{thm}
\declaretheorem[sibling=thm]{lemma}

\declaretheorem[sibling=thm]{claim}
\declaretheorem[sibling=thm]{conjecture}

\declaretheorem[sibling=thm,style=definition]{definition}

\theoremstyle{definition}
\newtheorem*{assert}{Assertion}


\newcommand{\bF}{\mathbf{F}}
\newcommand{\cF}{\mathcal{F}}
\newcommand{\cH}{\mathcal{H}}
\newcommand{\bH}{\mathbf{H}}
\newcommand{\cG}{\mathcal{G}}
\newcommand{\cI}{\mathcal{I}}

\newcommand{\bS}{\mathbf{S}}
\newcommand{\cS}{\mathcal{S}}

\newcommand{\cW}{\mathcal{W}}
\newcommand{\1}{\mathbbm{1}}

\renewcommand{\emptyset}{\varnothing}

\renewcommand{\le}{\leqslant}
\renewcommand{\ge}{\geqslant}
\renewcommand{\leq}{\leqslant}
\renewcommand{\geq}{\geqslant}

\newcommand{\eps}{\varepsilon}

\DeclareMathOperator{\Var}{Var}

\newcommand{\Nat}{\mathbb{N}}

\newcommand{\Ex}{\mathbb{E}}
\renewcommand{\Pr}{\mathbb{P}}


\title{Towards the Kohayakawa--Kreuter conjecture on asymmetric Ramsey properties}


\author{Frank Mousset}
\address{Frank Mousset, School of Mathematical Sciences, Tel Aviv University, Tel Aviv 6997801, Israel}
\email{moussetfrank@gmail.com}
\author{Rajko Nenadov}
\address{Rajko Nenadov, Department of Computer Science, ETH Zurich, 8092 Z\"urich, Switzerland}
\email{rnenadov@inf.ethz.ch}
\author{Wojciech Samotij}
\address{Wojciech Samotij, School of Mathematical Sciences, Tel Aviv University, Tel Aviv 6997801, Israel}
\email{samotij@post.tau.ac.il}
\thanks{Research supported in part by the Israel Science Foundation (ISF) grants 1147/14 (FM and WS) and 1028/16 (FM) and ERC Starting Grant 633509 (FM)}
\thanks{Part of the work was done while the second author was visiting Tel Aviv University.}
\thanks{A first draft of this paper was produced at the workshop of the research group of Angelika Steger in Buchboden in July 2018.}
\date{\today}

\begin{document}

\begin{abstract}
  For fixed graphs $F_1,\dotsc,F_r$, we prove an upper bound on the threshold function for the property that $G(n,p) \to (F_1,\dotsc,F_r)$. This establishes the $1$-statement of a conjecture of Kohayakawa and Kreuter.
\end{abstract}

\maketitle

\section{Introduction}

Given a graph $G$, a positive integer $r$, and graphs $F_1, \dotsc, F_r$, we  write
\[
  G \to (F_1, \dotsc, F_r)
\]
if for every colouring of the edges of $G$ using colours from the set $[r] :=
\{1,\dotsc,r\}$, there exists a copy of $F_i$ in $G$ whose all edges have been
coloured $i$, for some $i\in [r]$. We study the
asymptotic probability that
\[
  G(n,p) \to (F_1, \dotsc, F_r)
\]
for fixed graphs $F_1,\dotsc,F_r$, where $G(n,p)$ is the binomial random graph with $n$ vertices and edge probability $p$.

An important special case of this problem, known as the \emph{symmetric} case, arises when the graphs $F_1,\dotsc, F_r$ are all the same. The study of symmetric Ramsey properties in random graphs was initiated by {\L}uczak, Ruciński, and Voigt~\cite{luczak1992ramsey}, who proved that $p = n^{-1/2}$ is a threshold for the property $G(n,p) \to (K_3,K_3)$. (The earlier work of Frankl and R\"odl~\cite{FraRod86} established that $G(n,p) \to (K_3, K_3)$ under the stronger assumption that $p \ge n^{\eps - 1/2}$.) This was followed by a series of papers by R\"odl and Ruci\'nski~\cite{rodl1993lower,rodl1994random,rodl1995threshold} that culminated in the following statement. For a nonempty graph $F$, let $d_2(F) := 1/2$ if $F = K_2$ and $d_2(F):= \frac{e_{F}-1}{v_{F}-2}$ otherwise and define the \emph{$2$-density} of $F$ by
\begin{equation}\label{eq:m2}
  m_2(F) := \max{\left\{ d_2(F') : F'\subseteq F \text{ with }e_{F'}\geq 1\right\}}.
\end{equation}

\begin{thm}[\cite{rodl1995threshold}]\label{thm:rr}
  Let $r\geq 2$ and suppose that $F$ is a nonempty graph
  such that at least one component of $F$ is not a star or (in the
  case $r=2$) a path of length three.
  Then there exist positive constants $c$ and $C$ such that
  \[
    \lim_{n\to\infty} \Pr\big(G(n,p)\to (\underbrace{F,\dotsc,F}_{\text{$r$ times}})\big) =
    \begin{cases}
      0 &\text{if $p\leq cn^{-1/m_2(F)}$}\\
      1 &\text{if $p\geq Cn^{-1/m_2(F)}$}.
    \end{cases}
  \]
\end{thm}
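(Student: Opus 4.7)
The plan is to prove the 0-statement (lower bound on the threshold) and the 1-statement (upper bound) separately; the techniques and the main obstacles for the two directions are quite different.

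\textbf{The 1-statement.} My approach would be the hypergraph container method. Let $\cH$ be the $e_F$-uniform hypergraph on $E(K_n)$ whose hyperedges are the edge sets of copies of $F$ in $K_n$. An $r$-colouring of $G = G(n,p)$ witnessing $G \not\to (F,\dotsc,F)$ is precisely a partition of $E(G)$ into $r$ parts, each independent in $\cH$. I would first prove a supersaturation lemma: every subset of $E(K_n)$ of size at least $\delta n^2$ contains $\Omega(n^{v_F})$ copies of $F$. Feeding this into the container lemma, with parameters tied to $m_2(F)$, yields a family $\cC$ of subsets of $E(K_n)$, each of size at most $\delta n^2$, such that every independent set of $\cH$ is contained in some $C \in \cC$ and $\log |\cC| = O(n^{2 - 1/m_2(F)} \log n)$. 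For $p \ge C n^{-1/m_2(F)}$ a union bound over $r$-tuples of containers shows that whp $E(G(n,p))$ is not contained in any union $C_1 \cup \dotsb \cup C_r$, forcing some colour class to span a monochromatic copy of $F$.

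\textbf{The 0-statement.} Here the task is to exhibit, whp, an explicit $r$-colouring of $G(n,p)$ that avoids monochromatic copies of $F$. Fix a subgraph $F^\ast \subseteq F$ achieving $d_2(F^\ast) = m_2(F)$. For $p = c n^{-1/m_2(F)}$ the expected number of copies of $F^\ast$ in $G(n,p)$ is $\Theta(n^2)$, so such copies are relatively sparse, and Janson's inequality gives the concentration needed to treat them as a well-spread hypergraph. I would use two-round exposure $G(n,p) = G(n,p_1) \cup G(n,p_2)$ with $p_1, p_2 = \Theta(p)$: first expose $G(n,p_1)$ and select one ``core'' edge in each copy of $F^\ast$; then add $G(n,p_2)$ and colour the union (using the Lov\'asz Local Lemma or a deletion argument) so that no colour class contains a copy of $F$. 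The exclusion of stars and (for $r = 2$) paths of length three ensures that $F^\ast$ has enough internal structure for such a scheme to succeed.

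\textbf{Main obstacle.} The hardest step in the 1-statement is parameter-matching in the container argument when $m_2(F)$ is attained only by a proper subgraph $F^\ast \subsetneq F$: the supersaturation statement is naturally phrased in terms of copies of $F$, whereas the density threshold is governed by $F^\ast$, so the container construction should be balanced with respect to $F^\ast$ while the inductive covering step tracks copies of $F$. On the 0-statement side, the analogous subtlety is that the deletion/colouring scheme must be executed simultaneously for all $r \ge 2$ colour classes, which is precisely why the degenerate cases (stars, paths of length three) must be excluded.
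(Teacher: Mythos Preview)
The paper does not give its own proof of this theorem; it is quoted as background from R\"odl--Ruci\'nski. What the paper does prove is Theorem~\ref{thm:main}, whose specialisation $F_1=\dotsb=F_r=F$ yields the $1$-statement of Theorem~\ref{thm:rr} (note $m_2(F,F)=m_2(F)$). So the only meaningful comparison is between your $1$-statement sketch and the paper's proof of Theorem~\ref{thm:main}; the $0$-statement is not treated in this paper at all.

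\medskip
\noindent\textbf{The gap in your $1$-statement.}
Your plan is to obtain a container family $\cC$ with $\log|\cC|=O\big(n^{2-1/m_2(F)}\log n\big)$ and then union-bound over $r$-tuples $(C_1,\dotsc,C_r)\in\cC^r$ the event $E(G(n,p))\subseteq C_1\cup\dotsb\cup C_r$. For $p=Cn^{-1/m_2(F)}$ that event has probability at most $\exp\big(-\Theta(n^{2-1/m_2(F)})\big)$, while the number of $r$-tuples is $\exp\big(O(n^{2-1/m_2(F)}\log n)\big)$. The $\log n$ does not cancel, so this argument only gives the $1$-statement for $p\ge Cn^{-1/m_2(F)}\log n$, not for a constant $C$. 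This is exactly the discrepancy discussed in the paper's proof overview: a straight union bound over containers is off by $\log n$. The remedy, both in Nenadov--Steger and in the present paper (Lemma~\ref{lem:containers}), is to use that every $F$-free colour class $c^{-1}(i)$ contains a \emph{signature} $S_i=g(c^{-1}(i))$ of size $O(n^{2-1/m_2(F)})$ with $S_i\subseteq c^{-1}(i)\subseteq f(S_i)$. Because $S_i\subseteq G(n,p)$, the term $\Pr\big(S_2\cup\dotsb\cup S_r\subseteq G(n,p)\big)=p^{|S_2\cup\dotsb\cup S_r|}$ enters the sum, and
\[
\sum_{(S_2,\dotsc,S_r)}p^{|S_2\cup\dotsb\cup S_r|}\le \exp\big(O(n^{2-1/m_2(F)})\big),
\]
with no $\log n$; combined with Janson's inequality on the complement of $f(S_2)\cup\dotsb\cup f(S_r)$ (via Lemma~\ref{lem:ramsey}) and Harris's inequality, this closes the argument at the correct threshold. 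Your sketch omits the signature step, and the ``main obstacle'' you identify (balancedness of $F$) is not the actual bottleneck in the symmetric case.

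\medskip
\noindent\textbf{The $0$-statement.}
The paper says nothing here, so there is nothing to compare against. Still, your outline (two-round exposure, pick a ``core'' edge in every copy of $F^\ast$, then colour via the Local Lemma or deletion) does not correspond to any known proof and, as stated, does not produce a valid $r$-colouring: selecting one edge per copy of $F^\ast$ gives no mechanism for avoiding monochromatic copies of $F$ when colouring the remaining edges, and the dependency degree in the Local Lemma is $\Theta(n^{v_F-2}p^{e_F-1})$, which at $p=cn^{-1/m_2(F)}$ can be large. The actual R\"odl--Ruci\'nski argument is structural: one shows that a.a.s.\ the ``$F$-core'' of $G(n,p)$ decomposes into sparse pieces on which an explicit good colouring can be built by hand; the exclusion of stars and (for $r=2$) $P_3$ enters precisely in that deterministic colouring step, not in a probabilistic deletion.
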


One usually refers to the assertion of Theorem~\ref{thm:rr} for $p\leq cn^{-1/m_2(F)}$ as the \emph{$0$-statement} and to the assertion for $p\geq Cn^{-1/m_2(F)}$ as the \emph{$1$-statement}.
It is worth pointing out that the assumption on the structure of $F$ is necessary. Indeed, if every component of $F$ is a star, then it is easy to see that $G\to (F,\dotsc,F)$ as soon as $G$
has sufficiently many vertices of degree $r(\Delta_F-1)+1$. The function
$n^{-1-1/(r(\Delta_F-1)+1)}$ is a threshold for this property in $G(n,p)$; on
the other hand, $m_2(F) = 1$ for every such $F$. In the case where $r=2$ and
at least one component of $F$ is a path of length three while the others are
stars, the $0$-statement of Theorem~\ref{thm:rr} is no longer true. 
For example, if
$p=cn^{-1/m_2(P_3)}=cn^{-1}$ for some $c > 0$, then the probability that
$G(n,p)$ contains a cycle of length five with an edge pending at every vertex
is bounded from below by a positive constant (that depends on $c$); it is easy
to check that
every colouring of the edges of this graph with two colours yields a 
monochromatic path of length three.
This exceptional case,
originally missed by Rödl and Ruciński in~\cite{rodl1995threshold}, was
eventually noticed and corrected by Friedgut and
Krivelevich~\cite{friedgut2000sharp};
the corrected version of the $0$-statement
requires the assumption that $p=o(n^{-1/m_2(F)})$.
A short proof of Theorem \ref{thm:rr} was given
by Nenadov and Steger~\cite{nenadov2016short}.

In the case where $F$ is a tree (other than a star or the path of length
three)~\cite{friedgut2000sharp}, a triangle \cite{friedgut06sharp} or, more
generally, a strictly $2$-balanced\footnote{A nonempty graph $F$ is said to be
\emph{2-balanced} if $d_2(F) = m_2(F)$ and \emph{strictly $2$-balanced} if in
addition $d_2(F') < m_2(F)$ for every nonempty proper subgraph $F' \subseteq
F$.} graph that can be made bipartite by removing some edge
\cite{schacht2018sharp}, it is known that the property $G(n,p) \to (F, F)$ has
a sharp threshold: 

\begin{thm}[\cite{friedgut2000sharp,friedgut06sharp,schacht2018sharp}]\label{thm:sharp}
  Suppose that $F$ is either (i) a tree that is not a star or the path of length three
  or (ii) a strictly $2$-balanced graph with $e_F \ge 2$ edges that can be made
  bipartite by removing some edge. Then there exist $c_0$, $c_1$, and a function
  $c \colon \Nat \to [c_0, c_1]$ such that 
  \[
    \lim_{n \to \infty} \Pr\big( G(n,p) \to (F, F)\big) =
    \begin{cases}
      0 &\text{if } p \ge (1 + \eps)c(n) n^{2 - 1/m_2(F)} \\
      1 &\text{if } p \le (1 - \eps)c(n) n^{2 - 1/m_2(F)} \\
    \end{cases}
  \]
  for every positive constant $\eps$.
\end{thm}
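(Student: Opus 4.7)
The plan is to apply the Bourgain--Friedgut sharp threshold criterion for monotone graph properties, which guarantees that a coarse threshold must arise from a ``local'' obstruction: near the critical probability, the property is well-approximated by the property of containing some fixed booster graph $B$. I would argue by contradiction. Suppose the threshold is coarse. Then there exist a graph $B$ and $\alpha > 0$ such that, writing $p^* = p^*(n)$ for the critical probability (which by Theorem~\ref{thm:rr} satisfies $p^* = \Theta(n^{-1/m_2(F)})$), planting a fixed copy of $B$ inside $[n]$ in $G(n, p^*)$ increases the probability of arrowing $(F,F)$ by at least $\alpha$ along a subsequence of $n$.

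The structural hypothesis on $F$ is then used to rule out such a $B$. In case (i), where $F$ is a tree other than a star or $P_3$, I would exploit the fact that at $p = \Theta(n^{-1/m_2(F)})$ copies of $F$ appear as essentially isolated pendant fragments of a sparse forest; a random greedy $2$-colouring adapted to the branching structure of $F$ then avoids a monochromatic copy with positive probability, and this colouring is robust under insertion of a bounded $B$. In case (ii), where $F$ is strictly $2$-balanced and $F \setminus e$ is bipartite for some edge $e$, the relevant colouring is guided by a bipartition of $F \setminus e$: strict $2$-balancedness of $F$ forces that two copies of $F$ almost never share more than a single edge in $G(n,p^*)$, so the copies of $F$ behave essentially like a Poisson process, and the bipartite-based colouring can be patched across copies. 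In either scenario the bounded $B$ is too small, combinatorially, to force enough interactions to shift the Ramsey probability by a constant.

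The main obstacle is converting the abstractly produced booster $B$ into an explicit structural contradiction. Friedgut's criterion only asserts the existence of $B$; closing the argument requires showing that if $B$ is useful then it must contain a subgraph $H$ with $d_2(H) \geq m_2(F)$ whose copies in $G(n,p^*)$ interact nontrivially with many copies of $F$, and strict $2$-balancedness sharply restricts how dense such an $H$ can be while appearing at all at this scale. Combining this density bound with a two-phase colouring argument---first colouring a ``typical'' random-greedy part of $G(n,p^*) \setminus B$ via the structural $2$-colouring induced by $F$, then extending it across the planted $B$ while handling the rare dense configurations---is the technical heart of the proof. It is precisely this step where the methods of~\cite{friedgut2000sharp}, \cite{friedgut06sharp}, and~\cite{schacht2018sharp} diverge, each tailored to the particular structure of $F$ under consideration.
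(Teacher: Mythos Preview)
The paper does not prove Theorem~\ref{thm:sharp}; it is quoted as background, with attribution to \cite{friedgut2000sharp,friedgut06sharp,schacht2018sharp}, and no argument is supplied. So there is no ``paper's own proof'' to compare your proposal against.

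As for the proposal itself: the overall architecture is correct and matches the cited works. All three papers proceed via Friedgut's (later Friedgut--Bourgain's) sharp-threshold criterion, assume a coarse threshold, extract a bounded booster $B$, and then derive a contradiction by exhibiting a good $2$-colouring of $G(n,p^*)\cup B$ that is robust to the planted $B$.

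That said, what you have written is a roadmap, not a proof, and the parts you leave vague are exactly where the difficulty lies. Two specific points. First, your claim in case~(ii) that strict $2$-balancedness ``forces that two copies of $F$ almost never share more than a single edge'' is not what strict $2$-balancedness gives you; it bounds the \emph{expected number} of pairs of copies with a given nontrivial overlap, but copies sharing several edges do occur, and the argument in \cite{schacht2018sharp} does not proceed via a Poisson heuristic on copies of $F$ but via a transference/regularity framework combined with a Ramsey-type statement for sparse regular partitions. Second, the ``bipartition-guided colouring patched across copies'' and the ``random greedy colouring adapted to the branching structure'' are placeholders; in the actual papers these steps occupy most of the work and require, respectively, the sparse regularity machinery and a delicate structural analysis of the component containing $B$. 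Your final paragraph essentially acknowledges this: you identify the technical heart correctly but do not supply it.
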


\noindent
It is widely believed that one can choose $c(n)$ to be a constant function;
however proving this and, what is more, determining the value of the constant, remains a
formidable challenge. 

The main topic of this paper is the \emph{asymmetric} case of the Ramsey
problem in $G(n,p)$, where the graphs $F_1,\dotsc,F_r$ are allowed to be
different. 
This problem was first considered by Kohayakawa and Kreuter~\cite{KohKre97}.
For nonempty graphs $F_1$ and $F_2$ with $m_2(F_1)\geq m_2(F_2)$, we define the \emph{asymmetric $2$-density}
\begin{equation}
  \label{eq:m2-F1-F2}
  m_2(F_1, F_2) := \max\left\{ \frac{e_{F_1'}}{v_{F_1'} - 2 + 1/m_2(F_2)} :
  F_1' \subseteq F_1 \text{ with } e_{F_1'} \geq 1 \right\}.
\end{equation}
The following generalisation of Theorem~\ref{thm:rr} is a slight rephrasing of a conjecture made by Kohayakawa and Kreuter~\cite{KohKre97}. (The original conjecture was stated only for two colours and it lacked the assumption that the graphs $F_1$ and $F_2$ are not forests, which was later added by Kohayakawa, Schacht, and Sp\"ohel~\cite{kohayakawa2014upper}.)

\begin{conjecture} \label{conj:kk}
  Let $r\geq 2$ and suppose that $F_1, \dotsc, F_r$ are graphs with $m_2(F_1) \geq \dotsb \geq m_2(F_r)$ and $m_2(F_2) > 1$. Then there are positive constants $c$ and $C$ such that
  \[
    \lim_{n\to\infty} \Pr\big(G(n,p)\to (F_1,\dotsc,F_r)\big) =
    \begin{cases}
      0 &\text{if $p\leq c n^{-1/m_2(F_1,F_2)}$}\\
      1 &\text{if $p\geq C n^{-1/m_2(F_1,F_2)}$}.
    \end{cases}
  \]
\end{conjecture}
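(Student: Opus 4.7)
I aim at the \emph{$1$-statement} of Conjecture~\ref{conj:kk}, which is the main contribution of this paper; the $0$-statement requires quite different techniques and lies outside this plan. Fix $p\geq Cn^{-1/m_2(F_1,F_2)}$ with $C$ large, and suppose, for contradiction, that with probability bounded away from zero $G(n,p)\not\to(F_1,\dotsc,F_r)$. On this event, fix a witness colouring with classes $G_1,\dotsc,G_r$ in which $G_i$ is $F_i$-free. My plan combines the hypergraph container method applied to $F_1$-free graphs with an asymmetric balanced-supersaturation statement counting copies of $F_2$ that avoid a typical container.

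The first step is to apply the container theorem to the $e_{F_1}$-uniform hypergraph on $E(K_n)$ whose hyperedges are the edge sets of copies of $F_1$. Standard co-degree estimates at the scale $q=n^{-1/m_2(F_1)}$ produce a family $\cC$ of subgraphs of $K_n$ with $|\cC|\leq\exp\bigl(O(n^{2-1/m_2(F_1)}\log n)\bigr)$ such that every $F_1$-free graph on $[n]$ is contained in some $C\in\cC$, and each $C$ admits a structural description rich enough for the next step. In particular $G_1\subseteq C$ for some $C\in\cC$, and our task reduces to showing that, a.a.s., for every $C\in\cC$ the graph $G(n,p)\setminus C$ cannot be covered by $F_j$-free graphs for $j=2,\dotsc,r$.

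The core ingredient is an asymmetric balanced-supersaturation lemma: for every container $C\in\cC$ I would construct a family $\cF$ of copies of $F_2$ in $K_n$ whose edges lie in $E(K_n)\setminus C$, with $|\cF|=\Omega(n^{v_{F_2}})$ and co-degrees calibrated by the extremal ratios in~\eqref{eq:m2-F1-F2}. The exponent $m_2(F_1,F_2)$ is exactly the one that matches the resulting Janson-type bound
\[
  \Pr\!\bigl(G(n,p)\setminus C \text{ is } F_2\text{-free}\bigr)\leq\exp\!\bigl(-\Omega(p^{e_{F_2}}n^{v_{F_2}})\bigr)
\]
against the container entropy $\exp(O(n^{2-1/m_2(F_1)}\log n))$, so a single union bound over $\cC$ closes the two-colour case. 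For $r\geq3$ I would iterate: after fixing $C_1\supseteq G_1$, apply the same container/supersaturation pair to $F_2$-free subgraphs inside $G(n,p)\setminus C_1$, and so on. The hypothesis $m_2(F_2)\geq\dotsb\geq m_2(F_r)$ ensures that the new entropies are dominated by the work done in the first round.

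The main obstacle is the balanced-supersaturation construction. Producing $\cF$ \emph{uniformly} in $C\in\cC$, with the correct size and the right spread across all subgraphs $F_1'\subseteq F_1$ that witness $m_2(F_1,F_2)$ in~\eqref{eq:m2-F1-F2}, is what makes the conjecture genuinely asymmetric: one cannot merely count copies of $F_2$ in $K_n$ and discard those meeting $C$, but must use the structural information provided by the container lemma to produce, for every extremal $F_1'$, enough extensions to a copy of $F_2$ whose additional edges avoid $C$. This delicate calibration between entropy and supersaturation is where I expect the main technical work of the paper to be concentrated.
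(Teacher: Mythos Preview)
Your orientation is backwards relative to the paper, and this is not cosmetic: it breaks the union bound. You apply containers to the $F_1$-free class $G_1$, incurring an entropy of order $\exp\bigl(O(n^{2-1/m_2(F_1)}\log n)\bigr)$, and then hope to beat this with a Janson bound for the event that $G(n,p)\setminus C$ is $F_2$-free. But any Janson-type estimate at density $p=Cn^{-1/m_2(F_1,F_2)}$ is limited by the single-edge subgraph of $F_2$ to $\exp\bigl(-O(n^2 p)\bigr)=\exp\bigl(-O(n^{2-1/m_2(F_1,F_2)})\bigr)$; since $m_2(F_1,F_2)<m_2(F_1)$ whenever $m_2(F_1)>m_2(F_2)$, the exponent $2-1/m_2(F_1,F_2)$ is \emph{strictly smaller} than $2-1/m_2(F_1)$ in the genuinely asymmetric regime, and no refinement of the $F_2$-family can close a polynomial gap in the exponent. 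Note also that the ratios in~\eqref{eq:m2-F1-F2} range over subgraphs of $F_1$, not of $F_2$, so they are not the right object for calibrating co-degrees of $F_2$-copies; your displayed Janson bound $\exp(-\Omega(p^{e_{F_2}}n^{v_{F_2}}))$ is in any case not what Janson delivers.

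The paper does the opposite: it applies containers to colours $2,\dotsc,r$, producing signatures $S_i$ of total size $O(n^{2-1/m_2(F_2)})$, and applies a Janson/Suen bound to copies of $F_1$ in the complement of the containers. The remaining difficulty is that the expected number of signature sequences contained in $G(n,p)$ is $\exp\bigl(O(n^{2-1/m_2(F_2)})\bigr)$ only when $p=O(n^{-1/m_2(F_2)})$, i.e., only in the symmetric case. The remedy is not supersaturation. First one recolours so that every edge not in colour~$1$ lies in a copy of $F_1$ in $G$. Then one introduces a weight $w\colon E(F_1)\to[1,\infty)$ making $F_1$ ``$(w,F_2)$-balanced'' (Lemma~\ref{lem:balance}) and passes to a sparser random \emph{typed} graph $\cG(n,p,w)\subseteq G(n,p)$ in which an edge of type $e$ survives with probability $p^{w(e)}$. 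Balancedness guarantees that for each $e\in E(F_1)$ there is some $H_e\subseteq F_1$ containing $e$ with $n^{v_{H_e}}p^{w_{H_e}}=\Theta(n^{2-1/m_2(F_2)})$, and simultaneously that $\min_{I}n^{v_I}p^{w_I}\ge\Omega(n^{2-1/m_2(F_2)})$ so the Janson bound survives the sparsification. Every signature edge must lie in a typed copy of such an $H_e$, and a first-moment count over unions of these typed pieces (the Claim in Section~5) then matches the Janson exponent $n^{2-1/m_2(F_2)}$ with no balancedness hypothesis on $F_1$. This typed-graph/weight machinery is the heart of the proof and is entirely absent from your plan.
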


The assumption that $m_2(F_2) \geq 1$ is necessary, since otherwise we have $m_2(F_2)=1/2$ (i.e., $F_2$ is a matching) and so $m_2(F_1,F_2) = e_{F_1'}/v_{F_1'}$ for some nonempty subgraph $F'_1\subseteq F_1$. In this case, for every constant $C > 0$, the probability that $G(n,p)$ with $p = Cn^{-1/m_2(F_1,F_2)}$ contains no copies of $F_1$ at all exceeds a positive constant (that depends on $C$); see, for example~\cite{JanLucRuc00book}. The assumption that $m_2(F_2) > 1$ (which holds if and only if $F_2$ is not a forest) is most likely not always necessary, but it precludes exceptional sequences $F_1, \dotsc, F_r$ such as a sequence of stars.

There have been several attempts at resolving Conjecture~\ref{conj:kk}. Kohayakawa and Kreuter~\cite{KohKre97} proved it in the case where each $F_i$ is a cycle. Marciniszyn, Skokan, Sp\"ohel, and Steger~\cite{marciniszyn2009asymmetric} observed that the proof of the $1$-statement of Conjecture~\ref{conj:kk} given in~\cite{KohKre97} for sequences of cycles extends to all sequences $F_1, \ldots, F_r$ such that $F_1$ is strictly $2$-balanced, assuming the so-called K{\L}R (Kohayakawa--{\L}uczak--R\"odl) conjecture~\cite{kohayakawa1997klr} holds. (The K{\L}R conjecture has since been verified, see~\cite{balogh2015independent,conlon2014klr,saxton2015hypergraph}.) The main result of~\cite{marciniszyn2009asymmetric} however was a proof of the $0$-statement of Conjecture~\ref{conj:kk} in the case where each $F_i$ is a complete graph. A self-contained (i.e., not relying on the K{\L}R conjecture) proof of the $1$-statement of Conjecture~\ref{conj:kk} for $r = 2$ that assumes a similar density condition, namely that $F_1$ is strictly balanced w.r.t.\ $m_2(\cdot, F_2)$, was given by Kohayakawa, Schacht, and Sp\"ohel~\cite{kohayakawa2014upper}.
This result was generalised by allowing $F_1, \dotsc, F_r$ to be uniform hypergraphs and extended from two to an arbitrary number of colours by Gugelmann, Nenadov, Person, \v{S}kori\'c, Steger, and Thomas~\cite{gugelmann2017symmetric}. It was furthermore shown in~\cite{gugelmann2017symmetric} that the $1$-statement of Conjecture~\ref{conj:kk} holds, with no additional conditions on the graphs $F_1,\dotsc,F_r$, under the stronger assumption that $p \ge Cn^{-1/m_2(F_1,F_2)}\log n$. The proofs of both these statements employed the hypergraph container method developed by Balogh, Morris, and Samotij~\cite{balogh2015independent} (see also~\cite{BalMorSam-ICM}) and, independently, by Saxton and Thomason~\cite{saxton2015hypergraph}. Let us also mention that the short argument of Nenadov and Steger~\cite{nenadov2016short} that establishes the $1$-statement of Theorem~\ref{thm:rr} can be rewritten almost verbatim to give a proof of the $1$-statement of Conjecture~\ref{conj:kk} in the case where $m_2(F_1) = m_2(F_2)$, with no further conditions on $F_1$ (this was explicitly observed in~\cite{hancock2017independent}).

In summary, all previous results related to the $1$-statement of Conjecture~\ref{conj:kk} require either some nontrivial assumptions on $F_1$ (or both $F_1$ and $F_2$) or a stronger lower bound on $p$. Our main contribution is a proof of this statement in its full generality.

\begin{thm}\label{thm:main}
  Let $r\geq 2$ and suppose that $F_1, \dotsc, F_r$ are graphs with $m_2(F_1) \ge \dotsb \ge m_2(F_r)$ and $m_2(F_2)\geq 1$. Then there exists a positive constant $K$ such that if $p=p(n)\geq Kn^{-1/m_2(F_1,F_2)}$, then
  \[
    \lim_{n\to\infty}\Pr\big(G(n,p) \to  (F_1,\dotsc,F_r)\big) = 1.
  \]
\end{thm}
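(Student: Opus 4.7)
The strategy is to apply the hypergraph container method separately to each of the colours $2, \ldots, r$, and then to argue that the residual of $G := G(n,p)$ outside the resulting containers still contains a copy of $F_1$, which would be monochromatic in colour~$1$. Take $p = K n^{-1/m_2(F_1, F_2)}$ with $K$ large, and suppose toward a contradiction that there is a colouring $c \colon E(G) \to [r]$ with no monochromatic $F_i$ in colour~$i$. Writing $G_i := c^{-1}(i)$, each $G_i$ is $F_i$-free, so nothing in what follows uses colour~$1$ other than through the identity $G_1 = E(G)\setminus(G_2\cup\cdots\cup G_r)$.

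For each $i \ge 2$, the hypergraph container theorem applied to the $e_{F_i}$-uniform hypergraph of copies of $F_i$ on the vertex set $E(K_n)$ yields a family $\cS_i$ of ``fingerprints'' of controlled size and, for each $S \in \cS_i$, a container $C_i(S) \subseteq E(K_n)$ containing at most $\delta n^{v_{F_i}}$ copies of $F_i$, with the property that every $F_i$-free $H \subseteq K_n$ admits some $S \in \cS_i$ with $S \subseteq H \subseteq C_i(S)$. Applying this to $G_2, \ldots, G_r$ produces fingerprints $S_i \subseteq G$ and containers $C_i := C_i(S_i)$ with $G_i\subseteq C_i$; hence $E(G)\setminus\bigcup_{i\ge 2} C_i \subseteq G_1$ is $F_1$-free. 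A union bound over fingerprint tuples then gives
\[
  \Pr\!\big[ G \not\to (F_1,\dots,F_r) \big] \;\le\; \sum_{(S_i)_{i \ge 2}} p^{|\bigcup_{i\ge 2} S_i|} \cdot \Pr\!\bigl[\, G \cap \bigl(E(K_n) \setminus \textstyle\bigcup_{i \ge 2} C_i(S_i)\bigr) \text{ is $F_1$-free} \,\bigr].
\]
The inner probability I plan to control by Janson's inequality, for which I need to understand the set $\cW(S_2,\dots,S_r)$ of copies of $F_1$ in $K_n$ whose edges avoid $\bigcup_{i\ge 2} C_i(S_i)$: the Janson exponent is $\Omega\bigl(p^{e_{F_1}}|\cW|/(1+\Delta/\mu)\bigr)$, where $\Delta$ is the usual correlation term, and with the parameters of the container step calibrated appropriately this must dominate the entropy $\sum_{i\ge 2} |S_i| \log n$ paid in the union bound. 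By the choice of $p$ and the definition of $m_2(F_1,F_2)$, the ``target'' exponent on the right-hand side is of order $K^{\Omega(1)} n^{2 - 1/m_2(F_2)}$.

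The main obstacle is the supersaturation step: showing that for \emph{every} admissible fingerprint tuple $(S_i)_{i\ge 2}$ the set $\cW(S_2,\dots,S_r)$ is random-enough for Janson to yield the required decay. Previous works handled only the case where $F_1$ is strictly balanced with respect to $m_2(\cdot, F_2)$, in which case a single application of a balanced-supersaturation lemma suffices. For a general $F_1$, my plan is to pass to a subgraph $F_1^* \subseteq F_1$ attaining the maximum in~\eqref{eq:m2-F1-F2} (so that $F_1^*$ is strictly $m_2(\cdot, F_2)$-balanced with $m_2(F_1^*, F_2) = m_2(F_1, F_2)$), count copies of $F_1$ as extensions of copies of $F_1^*$, and prove a two-layer supersaturation: first for copies of $F_1^*$ avoiding $\bigcup_{i\ge 2} C_i$, then for extensions of such copies to full copies of $F_1$ inside $G(n,p)$. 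Making this decomposition robust simultaneously for every $F_1' \subseteq F_1$ that could appear in the definition of $m_2(F_1,F_2)$, and uniformly over all fingerprint tuples, is the principal technical challenge of the proof.
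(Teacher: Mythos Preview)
Your container-plus-Janson skeleton is correct up to the point where you write down the union bound, and you have even correctly identified the two competing quantities: the Janson exponent, which is $\Theta\bigl(K\,n^{2-1/m_2(F_2)}\bigr)$ (this follows directly from the definition of $m_2(F_1,F_2)$ and needs no balancedness assumption on $F_1$), and the entropy $\sum_{(S_i)} p^{|\bigcup S_i|}$ of the fingerprint tuples. The gap in your plan is that these two quantities are off by a factor of $\log n$, and your proposed fix attacks the wrong side. When $m_2(F_1)>m_2(F_2)$ one typically has $m_2(F_1,F_2)>m_2(F_2)$, so $n^2p/|S_i|$ is a positive power of $n$ and
\[
\sum_{(S_i)} p^{|\bigcup S_i|}\;=\;\exp\!\bigl(\Theta(n^{2-1/m_2(F_2)}\log n)\bigr),
\]
which swamps the Janson bound regardless of how large $K$ is. Your two-layer supersaturation is aimed at the Janson side, but that side is already optimal: the minimum of $n^{v_I}p^{e_I}$ over $\emptyset\neq I\subseteq F_1$ is attained and equals $\Theta(n^{2-1/m_2(F_2)})$, so no refinement of the counting of copies of $F_1$ can push the exponent higher. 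Moreover, your specific two-layer idea is not sound as stated: extending a monochromatic-in-colour-$1$ copy of $F_1^*$ to a copy of $F_1$ ``inside $G(n,p)$'' does not produce a monochromatic copy of $F_1$, since the extension edges may lie in $\bigcup_{i\ge2}C_i$ and carry colours $\ne 1$.

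The paper closes the $\log n$ gap on the \emph{entropy} side, and the mechanism is quite different from passing to a balanced subgraph. First, one observes (as in the symmetric proof of Nenadov--Steger) that in any bad colouring one may recolour with~$1$ every edge not lying in a copy of $F_1$ in $G$; hence each fingerprint edge is contained in some copy of $F_1$ in $G$. This alone is still not enough when $F_1$ is not balanced with respect to $m_2(\cdot,F_2)$. The new idea is to replace $G(n,p)$ by a sparser \emph{typed} random graph $\mathcal G(n,p,w)$: one picks a weight function $w\colon E(F_1)\to[1,\infty)$ making $F_1$ \emph{$(w,F_2)$-balanced} (such $w$ always exists by a short compactness argument), assigns each edge of $K_n$ a random type in $E(F_1)$, and keeps it with probability $p^{w(\text{type})}$. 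The balancedness condition guarantees that for every $f\in E(F_1)$ there is a subgraph $H_f\subseteq F_1$ through $f$ with $n^{v_{H_f}}p^{w_{H_f}}=\Theta(n^{2-1/m_2(F_2)})$. One then encodes a bad colouring by a witness $\mathcal W$ that is a union of at most $T(n)=O(n^{2-1/m_2(F_2)})$ typed copies of these $H_f$'s covering the fingerprints, and bounds $\sum_{\mathcal W}\Pr[\mathcal W\preceq\mathcal G]$ by an inductive first-moment argument (building $\mathcal W$ one $H_f$ at a time), obtaining $\exp\bigl(O(n^{2-1/m_2(F_2)})\bigr)$ with no $\log n$ loss. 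The Janson bound survives the passage to the typed model because the weights are chosen so that $\min_{\emptyset\ne I\subseteq F_1} n^{v_I}p^{w_I}$ is still $\Theta(n^{2-1/m_2(F_2)})$. In short, the missing ingredient in your plan is not a sharper count of copies of $F_1$, but a device that makes every edge of $F_1$ behave as if it sat in a strictly balanced subgraph; the weighted typed random graph is exactly that device.
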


\section{Preliminaries}

\subsection{Ramsey's theorem}

The following quantitative version of Ramsey's theorem can be obtained from the usual version by a standard averaging argument (see, e.g.,~\cite[Theorem 11]{gugelmann2017symmetric}).

\begin{lemma}[Ramsey's theorem]
  \label{lem:ramsey}
  For every positive integer $r$ and all graphs $F_1,\dotsc,F_r$, there exists
  a positive $\alpha$ and some $n_0$ such that the following holds for all
  $n\geq n_0$. For every colouring of the edges of $K_n$ with colours from
  $[r]$, there exists a colour $i\in [r]$ such that $K_n$ contains at least
  $\alpha n^{v_{F_i}}$ copies of $F_i$ whose edges all have colour $i$.
\end{lemma}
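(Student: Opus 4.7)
The plan is to deduce this quantitative version from the qualitative Ramsey theorem by a standard averaging (supersaturation) argument. First, I would invoke the classical Ramsey theorem to produce an integer $N = N(r, F_1, \dotsc, F_r)$ such that every $r$-colouring of $E(K_N)$ contains a monochromatic copy of $F_i$ in colour~$i$ for some $i \in [r]$. Set $n_0 := N$.

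Now fix any $n \ge n_0$ and any $r$-edge-colouring $\chi$ of $K_n$. I would examine all $\binom{n}{N}$ vertex subsets $S \subseteq V(K_n)$ of size $N$. By the choice of $N$, each induced colouring on $K_n[S]$ contains a monochromatic copy of $F_{i(S)}$ in colour $i(S)$ for some $i(S) \in [r]$. Applying the pigeonhole principle to the map $S \mapsto i(S)$ yields a colour $i^* \in [r]$ such that at least $\binom{n}{N}/r$ subsets $S$ contain a monochromatic copy of $F_{i^*}$ in colour~$i^*$.

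To finish I would double-count (subset, monochromatic copy) pairs: since every copy of $F_{i^*}$ in $K_n$ spans exactly $v_{F_{i^*}}$ vertices, it is contained in exactly $\binom{n - v_{F_{i^*}}}{N - v_{F_{i^*}}}$ $N$-subsets. Hence the number of monochromatic copies of $F_{i^*}$ in colour $i^*$ is at least
\[
  \frac{\binom{n}{N}/r}{\binom{n - v_{F_{i^*}}}{N - v_{F_{i^*}}}} \;=\; \frac{1}{r \binom{N}{v_{F_{i^*}}}} \binom{n}{v_{F_{i^*}}} \;\ge\; \alpha_{i^*}\, n^{v_{F_{i^*}}},
\]
for some constant $\alpha_{i^*} > 0$ depending only on $r$, $N$, and $v_{F_{i^*}}$. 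Taking $\alpha := \min_{i \in [r]} \alpha_i$ gives a single constant that works regardless of which colour $i^*$ is produced by the pigeonhole step.

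There is no real obstacle here; the argument is entirely routine, and the only minor bookkeeping is whether one counts labelled or unlabelled copies of $F_i$, which at worst changes the final constant $\alpha$ by a factor of $|\Aut(F_i)|$.
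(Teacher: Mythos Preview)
Your argument is correct and is precisely the ``standard averaging argument'' the paper alludes to (the paper does not spell out a proof but merely cites this method). There is nothing to add; the bookkeeping with labelled versus unlabelled copies is, as you note, immaterial to the existence of a positive constant~$\alpha$.
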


\subsection{Hypergraph containers}

The following lemma is a well-known consequence of the hypergraph container theorems obtained by Balogh, Morris, and
Samotij~\cite{balogh2015independent} and, independently, by Saxton and Thomason~\cite{saxton2015hypergraph}.

\begin{lemma}
  \label{lem:containers}
  For every graph $F$ and every positive $\eps$, there exists a positive $C=C(F,\eps)$ such that the following holds for
  all $n\in\Nat$. Let $\mathbf F(n)$
  be the family of all $F$-free graphs with vertex set $[n]$.
  Then there exist functions
  \[
    g\colon \mathbf F(n) \to 2^{E(K_n)}
    \quad\text{and}\quad
    f\colon 2^{E(K_n)} \to 2^{E(K_n)}
  \]
  such that, for every $G\in \mathbf F(n)$,
  \begin{enumerate}[label=(\roman*)]
    \item
      $g(G)$ has at most $Cn^{2-1/m_2(F)}$ edges,
    \item
      $f(g(G))$ contains at most $\eps n^{v_F}$ copies of $F$, and
    \item
      $g(G) \subseteq G \subseteq f(g(G))$.
  \end{enumerate}
\end{lemma}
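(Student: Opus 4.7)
The plan is to deduce this statement from a standard formulation of the hypergraph container theorem (as in \cite{balogh2015independent,saxton2015hypergraph}). Consider the auxiliary $e_F$-uniform hypergraph $\cH = \cH(n,F)$ whose vertex set is $E(K_n)$ and whose edges are precisely the edge sets of copies of $F$ in $K_n$. By construction, a graph $G$ with vertex set $[n]$ is $F$-free if and only if $E(G)$ is an independent set in $\cH$. Note that $|V(\cH)| = \binom{n}{2}$ and, by averaging over copies of $F$, the number of edges of $\cH$ is of order $n^{v_F}$.

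Next, I would verify the co-degree (or ``balanced supersaturation'') condition that powers the container theorem. For every $j \in \{1,\dotsc,e_F\}$ and every $j$-element set $S$ of edges of $K_n$, the number of copies of $F$ containing all edges of $S$ is at most a constant times the maximum, over subgraphs $F' \subseteq F$ with $e_{F'} = j$, of $n^{v_F - v_{F'}}$. A short calculation using the definition of $m_2(F)$ in \eqref{eq:m2} shows that this is enough to conclude that $\cH$ satisfies the standard degree assumptions at the scale $\tau := n^{-1/m_2(F)}$: one has $\Delta_j(\cH) \le c\, \tau^{j-1} \cdot e(\cH)/v(\cH)$ for each $j$ and an appropriate constant $c$. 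This is the one genuinely technical step, but it is a well-documented application of $m_2(F)$ and will likely be the main obstacle for a reader not already familiar with the method.

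With these estimates in hand, the hypergraph container theorem produces a family $\cC \subseteq 2^{E(K_n)}$ and a ``fingerprint'' map $T$ defined on the independent sets of $\cH$, such that for every independent set $I$:
\begin{enumerate}[label=(\alph*)]
  \item $T(I) \subseteq I$ and $|T(I)| \le C n^{2 - 1/m_2(F)}$;
  \item there is an element $\operatorname{Cont}(T(I)) \in \cC$, depending only on $T(I)$, with $I \subseteq \operatorname{Cont}(T(I))$;
  \item every element of $\cC$ spans at most $\eps n^{v_F}$ edges of $\cH$, i.e.\ at most $\eps n^{v_F}$ copies of $F$.
\end{enumerate}

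Finally, I would simply unwind the definitions: set $g(G) := T(E(G))$ and, for $S \in 2^{E(K_n)}$, set $f(S) := \operatorname{Cont}(S)$ when $S$ lies in the image of $T$ (and, say, $f(S) := E(K_n)$ otherwise, though this case will not be used). Then (i) is (a), (ii) is (c) applied to the container $f(g(G))$, and (iii) follows because $g(G) = T(E(G)) \subseteq E(G) \subseteq \operatorname{Cont}(T(E(G))) = f(g(G))$. This completes the reduction.
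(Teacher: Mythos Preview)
The paper does not actually prove Lemma~\ref{lem:containers}; it is stated without proof as ``a well-known consequence of the hypergraph container theorems'' of \cite{balogh2015independent,saxton2015hypergraph}. Your sketch is precisely the standard derivation that the paper is alluding to: encode $F$-free graphs as independent sets in the $e_F$-uniform hypergraph of copies of $F$, verify the co-degree condition $\Delta_j(\cH)\le c\,\tau^{j-1}\,e(\cH)/v(\cH)$ at scale $\tau=n^{-1/m_2(F)}$ via the inequality $(e_{F'}-1)/(v_{F'}-2)\le m_2(F)$, and read off $g$ and $f$ from the fingerprint and container maps. This is correct, and there is nothing to compare against in the paper itself.

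One small remark: to get containers with at most $\eps n^{v_F}$ copies of $F$ (rather than merely $(1-\delta)e(\cH)$ edges of $\cH$), one typically needs the iterated form of the container theorem; your phrasing ``the hypergraph container theorem produces\dots'' glosses over this, but the iterated version with deterministic fingerprints is stated explicitly in both \cite{balogh2015independent} and \cite{saxton2015hypergraph}, so this is not a gap so much as a place where a reader unfamiliar with the method might want an extra sentence or a precise citation.
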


\section{Proof overview}

Suppose that $G \nrightarrow (F_1, \dotsc, F_r)$, that is, that there exists a
colouring $c \colon E(G) \to [r]$ such that, for each $i \in [r]$, the graph
$c^{-1}(i)$ of edges coloured $i$ is $F_i$-free.
It follows from
Lemma~\ref{lem:containers} that there are `signatures' $S_2 =
g_2(c^{-1}(2)),\dotsc, S_r = g_r(c^{-1}(r))$, each with $|S_i|= O\big(n^{2-1/m_2(F_i)}\big)
\leq O\big(n^{2-1/m_2(F_2)}\big)$ edges, such that
$S_i \subseteq c^{-1}(i) \subseteq f_i(S_i)$ for each $i \in \{2, \dotsc, r\}$,
where the graph $f_i(S_i)$ contains $o(n^{v_{F_i}})$ copies of $F_i$. Since each
edge of $G$ that lies outside $f_2(S_2) \cup \dotsb \cup f_r(S_r)$ is coloured
$1$, the intersection of $G$ and the graph \[
  K(S_2, \dotsc, S_r) := K_n \setminus \big(f_2(S_2) \cup \dotsb \cup f_r(S_r)\big)
\]
must be $F_1$-free. In particular, the event $G(n,p) \nrightarrow (F_1, \dotsc,
F_r)$ is contained in the union of the
events
\[
  S_2 \cup \dotsb \cup S_r \subseteq G(n,p) \quad \text{and} \quad G(n,p) \cap K(S_2, \dotsc, S_r)
  \text{ is $F_1$-free},
\]
where $(S_2, \dotsc, S_r)$ ranges over all sequences of `signatures'. Since the
property ``$S_2 \cup \dotsb \cup S_r \subseteq G$\,'' is increasing in $G$ and the
property ``$G \cap K(S_1,\dotsc,S_r)$ is $F_1$-free'' is decreasing in $G$, Harris's
inequality and the union bound yield
\[
  \Pr\big(G(n,p) \nrightarrow (F_1, \dotsc, F_r)\big) \leq\kern-7pt \sum_{(S_2, \dotsc,
  S_r)} \Pr\big(S_2 \cup \dotsb \cup S_r \subseteq G(n,p)\big) \cdot \Pr\big(
  G(n,p) \cap K(S_2, \dotsc, S_r) \text{ is $F_1$-free}\big).
\]
Lemma~\ref{lem:ramsey} implies that the graph $K(S_2, \dotsc, S_r)$ has at least $\delta n^{v_{F_1}}$ copies of $F_1$, for some constant $\delta > 0$ that is independent of $(S_2, \dotsc, S_r)$, and consequently, following~\cite{JanLucRuc90}, one can derive the bound
\begin{equation}
  \label{eq:Janson-proof-overview}
  \Pr\big(G(n,p) \cap K(S_2, \dotsc, S_r)\text{ is $F_1$-free}\big) \le \exp\left(-\delta' \cdot \min\left\{n^{v_I} p^{e_I} : \emptyset \neq I \subseteq F_1 \right\}\right)
\end{equation}
from Janson's inequality.

If one assumes that $p \gg n^{-1/m_2(F_1, F_2)} \log n$, then the right-hand
side of~\eqref{eq:Janson-proof-overview} can be bounded from above by
$\exp\left(-\omega\big(n^{2-1/m_2(F_2)} \log n\big)\right)$, whereas there are
only $\exp\left(O\big(n^{2-1/m_2(F_2)} \log n\big)\right)$ sequences $(S_2,
\dotsc, S_r)$; thus one may conclude that $G(n,p) \rightarrow (F_1, \dotsc,
F_r)$ with probability very close to $1$, without any further assumptions on
$(F_1,
\dotsc, F_r)$. The weaker assumption that $p \gg n^{-1/m_2(F_1, F_2)}$ implies
only the upper bound $\exp\left(-\omega\big(n^{2-1/m_2(F_2)}\big)\right)$ on
the right-hand side of~\eqref{eq:Janson-proof-overview} and the challenge is
to obtain the estimate
\[
  \sum_{(S_2, \dotsc, S_r)} \Pr\big(S_2 \cup \dotsb \cup S_r \subseteq G(n,p)\big) = \exp\left(O\big(n^{2-1/m_2(F_2)}\big)\right).
\]
Unfortunately, this estimate is valid only if $p =
O\big(n^{-1/m_2(F_2)}\big)$, which we can assume only if $m_2(F_1) =
m_2(F_2)$. This is the essence of why proving the $1$-statement of
Conjecture~\ref{conj:kk} is significantly more difficult than proving the
$1$-statement of Theorem~\ref{thm:rr}.

A first step towards making the above union bound argument more efficient, already taken in~\cite{gugelmann2017symmetric}, is to restrict the family of `non-Ramsey' colourings that are being considered. To this end, observe that every colouring $c \colon E(G) \to [r]$ such that
\begin{equation}
  \label{eq:non-Ramsey-colouring}
  \text{$c^{-1}(i)$ is $F_i$-free for every $i \in [r]$}
\end{equation}
may be altered as follows: Every edge of $G$ that is not contained in a copy
of $F_1$ in $G$ is recoloured $1$. This way we obtain a new colouring $c$ that
still satisfies~\eqref{eq:non-Ramsey-colouring} but now each edge of
$c^{-1}(2) \cup \dotsb \cup c^{-1}(r)$ lies in a copy of $F_1$ in $G$. We may
thus replace the event ``$S_2 \cup \dotsb \cup S_r \subseteq G(n,p)$'' in the
above argument with the event ``each edge of $S_2 \cup \dotsb \cup S_r$ lies
in a copy of $F_1$ in $G(n,p)$'', which we will abbreviate by $S_2 \cup \dotsb
\cup S_r \subseteq_{F_1} G(n,p)$, and conclude that the event $G(n,p) \nrightarrow
(F_1, \dotsc, F_r)$ is contained in the union of the events
\[
  S_2 \cup \dotsb \cup S_r \subseteq_{F_1} G(n,p) \quad \text{and} \quad G(n,p) \cap K(S_2, \dotsc, S_r) \text{ is $F_1$-free},
\]
where again $(S_2, \dotsc, S_r)$ ranges over all sequences of `signatures'. Since the property $S_2 \cup \dotsb \cup S_r \subseteq_{F_1} G$ is still increasing in $G$, in order to complete the argument, it would suffice to prove that
\begin{equation}
  \label{eq:ex-signatures-overview}
  \sum_{(S_2, \dotsc, S_r)} \Pr\big(S_2 \cup \dotsb \cup S_r \subseteq_{F_1}
  G(n,p)\big) = \exp\left(O\big(n^{2-1/m_2(F_2)}\big)\right).
\end{equation}
Unfortunately, nothing in the spirit of~\eqref{eq:ex-signatures-overview} can
be true in general. Indeed, there are pairs of graphs $F_1, F_2$ such that
$m_2(F_1) > m_2(F_1, F_2) > m_2(F_2)$, but typically \emph{every} edge of
$G(n,p)$ lies in a copy of $F_1$
(for example, 
this is the case when $F_1$ contains an isolated edge), which
contradicts~\eqref{eq:ex-signatures-overview}. However, in the case where $F_1$
is strictly balanced w.r.t.\ $m_2(\cdot, F_2)$, one can prove a
version\footnote{When $m_2(F_1) > m_2(F_2)$, even if one assumes that $F_1$ is
strictly balanced w.r.t.\ $m_2(.,F_2)$,
  there is an event of very small probability in $G(n,p)$ that nevertheless
  blows up the left-hand side of~\eqref{eq:ex-signatures-overview} above
  $\exp\left(O\big(n^{2-1/m_2(F_2)}\big)\right)$. However, after conditioning
  on the complement of this event, the
  inequality~\eqref{eq:ex-signatures-overview} becomes true.}
of~\eqref{eq:ex-signatures-overview} that complements an argument similar to
the one we have outlined above; this was achieved
in~\cite{gugelmann2017symmetric}.

In order to dispose of the balancedness assumption, we shall restrict our attention only to a
subcollection of all copies of $F_1$ in $K_n$. More precisely, we will
require $c^{-1}(1)$ to avoid only a certain family $\cF$ of copies of $F_1$,
which we term \emph{typed} copies of $F_1$. Roughly speaking, every edge of~$K_n$
will be (randomly) mapped to an edge of $F_1$, which we call a type, and a copy
$\varphi(F_1)$ of $F_1$ in $K_n$ will belong to $\cF$ if and only if, for each $e \in E(F_1)$,
the type of $\varphi(e)$ is $e$. Moreover, we will replace
$G(n,p)$ with a sparser random subgraph of it, whose edge probabilities depend on the types.
(We will define both these
notions formally in Section~\ref{sec:typed-graphs}.) We are going to do both of these
things in such a way that the typed copies of $F_1$ in this sparser random graph `behave'
like (untyped) copies of a graph that is balanced w.r.t.\ $m_2(\cdot, F_2)$. In particular: (i)~the left-hand side
of~\eqref{eq:Janson-proof-overview} can be still bounded from above by
$\exp\left(-\omega\big(n^{2-1/m_2(F_2)}\big)\right)$ and (ii)~a version
of~\eqref{eq:ex-signatures-overview} holds true. Since the left-hand side
of~\eqref{eq:Janson-proof-overview} is decreasing in both $G(n,p)$ and the
family $\cF$, whereas each term in the left-hand side
of~\eqref{eq:ex-signatures-overview} is increasing in both $G(n,p)$ and $\cF$, we
will have to strike a delicate balance in order to achieve (i) and (ii)
simultaneously.

\section{$H$-typed graphs}
\label{sec:typed-graphs}

For the entirety of this section, suppose that $H$ and $F$ are fixed nonempty
graphs.

An important role in our proof is played by weight functions $w\colon E(H) \to [1,\infty)$. Given such a~function $w$ and a subgraph $I\subseteq H$, we shall use the shorthand $w_I := \sum_{e\in E(I)} w(e)$.

\begin{definition}[$(w,F)$-balanced graphs]\label{def:balance}
  Given a function $w\colon E(H)\to [1,\infty)$, we say that $H$ is \emph{$(w,F)$-balanced} if, for every edge $e\in E(H)$, we have
  \[
    \min\big\{ v_I-w_I/m_2(H,F) : I \subseteq H \text{ with } e \in E(I)\big\} = 2 - 1/m_2(F).
  \]
\end{definition}

\begin{lemma}\label{lem:balance}
  If $m_2(H)\geq m_2(F)$, then there exists a function $w\colon   E(H)\to[1,\infty)$ such that $H$ is $(w,F)$-balanced.
\end{lemma}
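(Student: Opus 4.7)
My plan is to set $\alpha := m_2(H, F)$ and $\beta := 2 - 1/m_2(F)$, and, given a weight $w \colon E(H) \to [1, \infty)$ and a nonempty $I \subseteq H$, to call $I$ \emph{tight} under $w$ if $v_I - w_I/\alpha = \beta$. The condition that $H$ be $(w, F)$-balanced then rephrases as: (a) $v_I - w_I/\alpha \geq \beta$ for every nonempty $I \subseteq H$, and (b) every edge of $H$ lies in at least one tight subgraph. The starting observation is that the constant weight $w \equiv 1$ already satisfies (a) — this is just the inequality $e_I \leq \alpha(v_I - 2 + 1/m_2(F))$ guaranteed by the definition \eqref{eq:m2-F1-F2}. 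Plugging a single-edge subgraph into that definition also yields $\alpha \geq m_2(F)$.

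I would then enforce (b) by a monotone weight-raising procedure: start with $w \equiv 1$, and while some edge $e \in E(H)$ is not contained in any tight subgraph, continuously raise $w(e)$ until some $I \ni e$ first becomes tight, then restart. Three observations will make this work. First, only subgraphs $I$ containing $e$ are affected when $w(e)$ is raised, and for any such $I$ the value $v_I - w_I/\alpha$ tends to $-\infty$ as $w(e) \to \infty$, so the raising step terminates at a finite value and some $I \ni e$ becomes tight. Second, condition (a) is maintained throughout by construction. Third — and this is the key bookkeeping — once a subgraph $I$ becomes tight it must remain tight for the rest of the procedure, since any subsequent raise would have to occur on an edge of $I$ and would drive $v_I - w_I/\alpha$ below $\beta$, violating (a).

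The third observation implies that the set of edges lying in some tight subgraph grows monotonically, and strictly so at each iteration (the edge $e$ chosen in a given round becomes covered along with all edges of the newly tight $I$). The procedure therefore terminates after at most $e_H$ iterations with a weight function $w$ satisfying both (a) and (b); the condition $w(e) \geq 1$ for every $e$ is automatic because we only raise weights from the initial constant function $1$. The assumption $m_2(H) \geq m_2(F)$ plays only a background role — it ensures that $m_2(H, F)$ is the natural quantity in \eqref{eq:m2-F1-F2} and that $\alpha \geq m_2(F)$ — and no step in the argument is more than routine verification beyond the persistence-of-tightness observation noted above.
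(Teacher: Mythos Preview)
Your argument is correct. Both your proof and the paper's rest on the same core idea: starting from the feasible weight $w\equiv 1$ (which satisfies condition~(a) by the very definition of $m_2(H,F)$), push the weights up as far as~(a) permits, and observe that at any such maximal point every edge must lie in a tight subgraph. The paper implements this in one stroke via compactness: the feasible set $\{w : r_e(w)\ge 0 \text{ for all }e\}$ is closed and bounded, so $\sum_e w(e)$ attains a maximum $\hat w$, and if some $r_e(\hat w)>0$ one could increase $\hat w(e)$ slightly, contradicting maximality. Your version reaches the same endpoint by coordinate ascent, raising one $w(e)$ at a time and invoking the ``persistence of tightness'' observation to guarantee termination. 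The compactness argument is shorter and avoids the bookkeeping about tight subgraphs persisting; your iterative version, on the other hand, is constructive and makes explicit which subgraph witnesses tightness at each edge. Either way the lemma follows, and your remark that the hypothesis $m_2(H)\ge m_2(F)$ is used only to make $m_2(H,F)$ well-defined matches the paper's (implicit) treatment.
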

\begin{proof}
  Given a function $w \colon E(H) \to [1,\infty)$ and an edge $e\in E(H)$, let us write
  \[
    r_e(w) := \min\big\{ v_I-w_I/m_2(H,F) : I \subseteq H \text{ with } e \in E(I)\big\} - 2 + 1/m_2(F).
  \]
  Our goal is then to show that there is a function $w$ such that $r_e(w) = 0$
  for all $e \in E(H)$. To this end, consider the set $\cW$ of all
  functions $w\colon E(H)\to[1,\infty)$ that satisfy $r_e(w) \ge 0$ for all $e \in E(H)$. Note that
  the constant function $w \equiv 1$ belongs to $\cW$, by the definition of
  $m_2(H, F)$,
  and that $0 \le r_e(w) \le 1/m_2(F) - w(e)/m_2(H,F)$ for every $e \in E(H)$
  and all $w\in\cW$. In particular,
  $\cW$ is nonempty and compact and thus the
  (continuous) map $\cW \ni w \mapsto w_H \in [0, \infty)$ achieves its maximum at
  some $\hat{w} \in \cW$.
  
  We claim that $r_e(\hat{w}) = 0$ for each $e \in
  E(H)$. If this were not true and there was an $e \in E(H)$ satisfying
  $r_e(\hat{w}) > 0$, then, for some sufficiently small $\eps > 0$, the
  function $\tilde{w}$ defined by $\tilde{w}(f) = \hat{w}(f) + \eps \cdot
  \1[f=e]$ would belong to $\cW$, contradicting the maximality of $\hat{w}$.
\end{proof}

\subsection{$H$-typed graphs}

By an \emph{$H$-typed} graph we mean a graph $\cG$ equipped with a \emph{type function} $\tau_\cG\colon E(\cG)\to E(H)$. If $H$ is clear from the context, we shall just say that $\cG$ is a \emph{typed graph}. We shall mostly use calligraphic letters to denote typed graphs; however, every subgraph $I\subseteq H$ will be treated as a typed graph in the natural way, by taking its type function to be the inclusion map from $E(I)$ into $E(H)$. We write $\cG'\cong \cG$ if there is a graph isomorphism between $\cG'$ and $\cG$ that preserves the type of every edge (in this case, we say that $\cG$ and $\cG'$ are \emph{typomorphic}). We write $\cG'\preceq\cG$ if $\cG'\subseteq \cG$ and $\tau_{\cG'}(e) = \tau_\cG(e)$ for all $e\in E(\cG')$. A \emph{typed copy} of a subgraph $I \subseteq H$ in $\cG$ is a typed graph $\cI \preceq \cG$ such that $\cI \cong I$, where $I$ is treated as a typed graph. Finally, for a set $\bH$ of (untyped) copies of $H$ in $K_n$ and a typed graph $\cG$ on the vertex set $[n]$, we write $\bH(\cG)$ for the set of all $\tilde H\in\bH$ such that $E(\tilde H) \subseteq E(\cG)$ and the typed graph $\tilde \cH$ obtained by equipping $\tilde H$ with the type function $\tau_\cG|_{E(\tilde H)}$ is typomorphic to $H$.

\subsection{Random $H$-typed graphs}

We define a random $H$-typed graph $\cG(n,p,w)$ as follows.

\begin{definition}[$\cG(n,p,w)$]
  Given $n\in \Nat$, $p\in (0,1)$, and $w\colon E(H)\to [1,\infty)$, we define
  $\cG(n,p,w)$ to be the random $H$-typed graph $\cG$ on the vertex set $[n]$
  constructed by the following two-step procedure:
  \begin{enumerate}[label=(\roman*)]
  \item
    Choose a function $\tau \colon E(K_n) \to E(H)$ uniformly at random.
  \item
    Include every $e \in E(K_n)$ independently with probability $p^{w(\tau(e))}$ and set $\tau_\cG = \tau |_{E(\cG)}$.
  \end{enumerate}
\end{definition}

We shall be using the following estimate of the upper tail of the number of typed copies of $H$ and its subgraphs in $\cG(n,p,w)$; the proof is a straightforward modification of the classical argument of Ruci\'nski and Vince~\cite{RuVi85}.

\begin{lemma}\label{lem:uppertail}
  Fix a nonempty subgraph $I\subseteq H$ and let $X_I$ denote the number of typed copies of $I$ in $\cG(n,p,w)$. We have
  \begin{equation} \Pr\big(X_I\geq 2\Ex[X_I]\big)
    \leq c\cdot \left(\min_{\emptyset \neq I'\subseteq I}
    n^{v_{I'}}p^{w_{I'}}\right)^{-1},
  \end{equation}
  for some positive constant $c$ depending only on $H$.
\end{lemma}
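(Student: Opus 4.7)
The plan is to bound the upper tail via Chebyshev's inequality, $\Pr(X_I \ge 2\Ex[X_I]) \le \Var(X_I)/\Ex[X_I]^2$, and to estimate the first two moments directly from the two-step definition of $\cG(n,p,w)$. Writing $X_I = \sum_\phi \1_\phi$, where $\phi \colon V(I) \hookrightarrow [n]$ ranges over labelled embeddings and $\1_\phi$ indicates that $\phi$ realises a typed copy of $I$, each indicator has mean $e_H^{-e_I}\, p^{w_I}$: each of the $e_I$ image edges must independently carry its prescribed $H$-type (probability $1/e_H$ per edge, since types are uniform) and be present in $\cG$ (probability $p^{w(\tau_I(e))}$ per edge, independently). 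Summation yields $\Ex[X_I] = \Theta(n^{v_I} e_H^{-e_I} p^{w_I})$, with a constant depending only on $|\Aut(I)|$.

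For the second moment I would expand $\Ex[X_I^2] = \sum_{\phi,\phi'} \Pr(\phi, \phi' \text{ both typed})$ and group pairs by the typed subgraph $J \preceq I$ that their images share. On the $e_J$ common edges of $K_n$, the random type must satisfy both $\phi$'s and $\phi'$'s requirement; this is possible only when the two maps identify these edges with pairs of edges of $I$ carrying the same type in $H$, and then the type-matching event occurs with conditional probability $e_H^{-e_J}$. On the remaining $2e_I - e_J$ edges all events are independent, so the joint probability equals $e_H^{-(2e_I - e_J)} p^{2w_I - w_J}$, while the number of pairs $(\phi,\phi')$ with typed intersection typomorphic to $J$ is $\Theta(n^{2v_I - v_J})$. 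Pairs with $V(\phi) \cap V(\phi') = \emptyset$ are independent and contribute at most $\Ex[X_I]^2$; the rest bounds $\Var(X_I)$.

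Dividing by $\Ex[X_I]^2 = \Theta(n^{2v_I} e_H^{-2e_I} p^{2w_I})$ and absorbing $e_H^{e_J}$ together with the (bounded) number of typed isomorphism classes of $J \preceq I$ into a constant $c=c(H)$ gives
\[
  \frac{\Var(X_I)}{\Ex[X_I]^2} \;\le\; c \sum_{J \preceq I,\; v_J \ge 1} \frac{1}{n^{v_J}\, p^{w_J}}.
\]
The terms with $e_J = 0$ are at most $n^{-1}$ and are dominated by any term with $e_J \ge 1$, since $p \le 1$ and $w_J \ge 0$ imply $p^{w_J} \le 1$. Hence the sum is at most $c' \cdot \max_{\emptyset \ne I' \subseteq I} (n^{v_{I'}} p^{w_{I'}})^{-1}$, which is exactly the right-hand side of the lemma.

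The only point to execute carefully is the enumeration of ordered pairs $(\phi,\phi')$ by their typed intersection and the verification that type consistency on the overlap contributes exactly the factor $e_H^{-e_J}$ — not zero for generic configurations, and not any additional dependence beyond that between the remaining edges. Once this bookkeeping is settled, the calculation is a direct adaptation of the classical Ruciński--Vince second-moment argument and no new ideas are required.
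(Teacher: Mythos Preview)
Your approach---Chebyshev plus a second-moment expansion grouped by the intersection pattern---is exactly the paper's, and your moment calculations are correct up to the point where you separate out the independent pairs. The problem is that you only remove \emph{vertex}-disjoint pairs, while in $\cG(n,p,w)$ the indicators $\1_\phi$ and $\1_{\phi'}$ are already independent whenever $\phi(I)$ and $\phi'(I)$ are \emph{edge}-disjoint (both the type $\tau(e)$ and the presence of $e$ are independent across edges of $K_n$). By keeping the pairs with $e_J=0$ but $v_J\ge 1$, your upper bound on $\Var(X_I)/\Ex[X_I]^2$ picks up an $O(n^{-1})$ term that you then try to absorb into the $e_J\ge 1$ terms.

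That absorption step is where the argument breaks: the inequality $p^{w_J}\le 1$ gives $(n^{v_J}p^{w_J})^{-1}\ge n^{-v_J}$ for each $e_J\ge 1$ term, but this goes in the wrong direction---it does not yield $n^{-1}\le (n^{v_J}p^{w_J})^{-1}$. Concretely, in the regime the paper cares about ($p=Kn^{-1/m_2(H,F)}$ with $H$ $(w,F)$-balanced) one has $\min_{\emptyset\ne I'\subseteq I} n^{v_{I'}}p^{w_{I'}}=\Theta\big(n^{2-1/m_2(F)}\big)$, and when $m_2(F)>1$ this is $\gg n$; your spurious $n^{-1}$ term is then strictly larger than the bound the lemma asserts. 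The fix is immediate: discard the $e_J=0$ pairs using edge-disjoint independence, after which your computation matches the paper's $\Var[X_I]=O\big(\max_{\emptyset\ne I'\subseteq I} n^{2v_I-v_{I'}}p^{2w_I-w_{I'}}\big)$ line for line.
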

\begin{proof}
  It is easy to see that
  \[ \Ex[X_I] = \Theta\left(n^{v_I} p^{w_I}\right) \]
  and
  \[ \Var[X_I] = O\left(\max_{\emptyset \neq I'\subseteq I}
  {n^{2v_I-v_{I'}} p^{2w_I-w_{I'}}}\right)
  = O\left(\frac{\Ex[X_I]^2}{\min_{\emptyset \neq I'\subseteq I}
  n^{v_{I'}}p^{w_{I'}}}\right). \]
  The assertion now follows from Chebyshev's inequality
  $\Pr\big(X_I\geq 2\Ex[X_I]\big) \leq \Var[X_I]/\Ex[X_I]^2$.
\end{proof}

\begin{lemma}\label{lem:janson}
  Fix a positive $\alpha$
  and a family $\bH$ of at least $\alpha
  n^{v_H}$ copies of $H$ in $K_n$.
  Then
  \begin{equation} 
    \Pr\big(\bH(\cG(n,p,w)) = \emptyset\big)
    \leq
    \exp\left(-
    c \cdot \min_{\emptyset \neq I \subseteq H} n^{v_{I}} p^{w_I}\right),
  \end{equation}
  for some positive constant $c$ depending only on $H$ and $\alpha$.
\end{lemma}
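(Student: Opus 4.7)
The plan is to apply Janson's inequality in the product probability space underlying the construction of $\cG(n,p,w)$. First, observe that this construction is equivalent to independently assigning to each edge $e \in E(K_n)$ a label $X_e \in \{*\} \cup E(H)$ with $\Pr(X_e = t) = p^{w(t)}/e_H$ for every $t \in E(H)$; then $\cG = \{e \in E(K_n) : X_e \neq *\}$, with type function $\tau_\cG(e) := X_e$. For every $\tilde H \in \bH$ and every graph isomorphism $\phi \colon H \to \tilde H$ I consider the atomic event
\[
  A_{\tilde H, \phi} := \{X_e = \phi^{-1}(e) \text{ for every } e \in E(\tilde H)\},
\]
whose occurrence certifies $\tilde H \in \bH(\cG)$. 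Since every element of $\bH(\cG)$ admits at least one such certificate, the event $\bH(\cG) = \emptyset$ coincides with the simultaneous failure of all the $A_{\tilde H, \phi}$.

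The key step is then to apply Janson's inequality in its form for atomic events in an independent product space (which can be justified by lifting to the product of independent Bernoullis $U_{e,t}$ with $\Pr(U_{e,t}=1) = p^{w(t)}/e_H$ and replacing each $A_{\tilde H, \phi}$ by the monotone surrogate $\bigcap_{e \in E(\tilde H)}\{U_{e,\phi^{-1}(e)} = 1\}$, or via the negative association of the indicators $\{\1[X_e=t]\}_{e,t}$). This yields
\[
  \Pr\bigl(\bH(\cG) = \emptyset\bigr) \leq \exp\!\left(-\tfrac{1}{2}\min\{\mu,\, \mu^2/\Delta\}\right),
\]
where $\mu = \sum_{\tilde H, \phi} \Pr(A_{\tilde H, \phi})$ and $\Delta$ is the usual sum of $\Pr(A_{\tilde H_1, \phi_1} \cap A_{\tilde H_2, \phi_2})$ over ordered distinct pairs that share at least one edge of $K_n$.

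The remaining two moment estimates are routine. Since $\phi$ is an edge-bijection, $\sum_e w(\phi^{-1}(e)) = w_H$, so $\Pr(A_{\tilde H,\phi}) = p^{w_H}/e_H^{e_H}$ and $\mu = |\bH|\cdot|\Aut(H)|\cdot p^{w_H}/e_H^{e_H} = \Omega(n^{v_H}p^{w_H})$. A pair $(\tilde H_1,\phi_1) \neq (\tilde H_2,\phi_2)$ sharing an edge has nonempty intersection only when the prescribed types agree on the shared edges; when they do, the intersection fixes $2e_H - e_J$ coordinates of total weight $2w_H - w_J$, where $J \subseteq H$ is the shared typed subgraph pulled back to $H$ via $\phi_1^{-1}$. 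Enumerating by the isomorphism type of $J$ and using that each $(\tilde H_1,\phi_1)$ admits $O(n^{v_H - v_J})$ extensions to a compatible $(\tilde H_2,\phi_2)$ with shared subgraph of that type yields
\[
  \Delta \leq O\!\left(\sum_{\emptyset \neq J \subseteq H} n^{2v_H - v_J}\, p^{2w_H - w_J}\right),
\]
whence $\mu^2/\Delta = \Omega\bigl(\min_{\emptyset \neq J \subseteq H} n^{v_J} p^{w_J}\bigr)$. Taking $I = H$ in this minimum bounds $\mu$ from below by the same quantity as well, so the exponent in Janson's inequality is $\Omega\bigl(\min_{\emptyset \neq I \subseteq H} n^{v_I} p^{w_I}\bigr)$, as required.

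The one conceptual concern is justifying Janson's inequality for the non-monotone atomic events $A_{\tilde H, \phi}$; any of the standard routes for this (e.g., the Bernoulli lifting above) suffices, and the bulk of the argument is then the bookkeeping computation above, which parallels the second-moment calculation in the proof of Lemma~\ref{lem:uppertail}.
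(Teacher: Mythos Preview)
Your moment computations for $\mu$ and $\Delta$ are correct and parallel the paper's; the point of departure is which correlation inequality you invoke. The paper does \emph{not} use Janson's inequality here but rather Suen's inequality, in the version due to Janson~\cite{janson1998new}:
\[
  \Pr\big(|\bH(\cG(n,p,w))| = 0\big) \leq \exp\left(-\min\left\{\frac{\mu^2}{8\Delta},\ \frac{\mu}{6\delta},\ \frac{\mu}{2}\right\}\right),
\]
with $\delta = \max_{C} \sum_{C':\,E(C')\cap E(C)\neq\emptyset} \Ex[\1_{C'}]$. Suen's inequality needs only a valid dependency graph --- here $C\sim C'$ iff $E(C)\cap E(C')\neq\emptyset$, which is valid because $\1_C$ depends only on $(X_e)_{e\in E(C)}$ --- and \emph{no} monotonicity hypothesis. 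The extra $\mu/\delta$ term is easily seen to be $\Omega(n^2)$ and hence dominated by the single-edge term in the minimum.

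The gap you flag is genuine, and the two remedies you sketch do not close it. For the Bernoulli lifting, applying Janson in the independent $U_{e,t}$ model bounds $\Pr\big(\bigcap_{\tilde H,\phi}\bar B_{\tilde H,\phi}\big)$, but to conclude anything about $\cG(n,p,w)$ you would need
\[
  \Pr\Big(\bigcap_{\tilde H,\phi}\bar A_{\tilde H,\phi}\Big)\ \le\ \Pr\Big(\bigcap_{\tilde H,\phi}\bar B_{\tilde H,\phi}\Big),
\]
and no coupling gives this: any coupling with the correct marginals and with $A_{\tilde H,\phi}\Rightarrow B_{\tilde H,\phi}$ yields the \emph{opposite} inequality, while a coupling with $B_{\tilde H,\phi}\Rightarrow A_{\tilde H,\phi}$ cannot exist (two $U_{e,t}$ can be~$1$ simultaneously). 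Negative association of $\{\1[X_e=t]\}_{e,t}$ does not save the standard proof of Janson either: the Boppana--Spencer argument uses FKG at the step $\Pr(A_i\cap A_j\mid \bigcap_{k\not\sim i}\bar A_k)\le \Pr(A_i\cap A_j)$, where $A_i\cap A_j$ and the conditioning event can depend on overlapping coordinates, and negative association only controls correlations across \emph{disjoint} index sets. The clean fix is simply to swap Janson for Suen; the rest of your argument then goes through verbatim.
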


\begin{proof}
  For a given copy $C \in \bH$, let us write $\1_C$ for the indicator variable
  of the event that $C\in \bH(\cG(n,p,w))$. Thus $|\bH(\cG(n,p,w))| = \sum_{C \in \bH} \1_C$.
  Observe that $X_C$ and $X_{C'}$ are independent if $C$ and $C'$ are edge-disjoint.

  For every $C\in \bH$, we have $\Ex[\1_C] \ge p^{w_H}/e_H^{e_H}$, and so
  \[
    \mu := \Ex[|\bH(G(n,p,w))|] \ge |\bH|p^{w_H}/e_H^{e_H} \geq \alpha n^{v_H} p^{w_H}/e_H^{e_H}.
  \]
  Define
  \[
    \Delta := \sum_{\substack{\{C,C'\}\subseteq \bH\\E(C\cap C')\neq \emptyset}}
    \Ex[\1_C\1_{C'}]\qquad \text{and}\qquad \delta := \max_{C\in
    \bH}\sum_{\substack{C' \in \bH\\E(C'\cap C)\neq \emptyset}} \Ex[\1_{C'}],
  \]
  It is easy to check that
  \[ \Delta = O\left(\frac{\mu^2}{\min_{\emptyset \neq I \subsetneq H}
  n^{v_I}p^{w_I}}\right), \] where the constants implicit in the
  $O$-notation may depend on $\alpha$ and on $H$. It is also easy to see that
  \[
    \delta \leq e_H n^{v_H-2} p^{w_H}.
  \]
  The claim then follows from the following version of Suen's inequality due to
  Janson~\cite[Theorem~3]{janson1998new}:
  \[ \Pr\big(|\bH(G(n,p,w))| = 0\big) \leq
  \exp\left(-\min\left(\frac{\mu^2}{8\Delta},\frac{\mu}{6\delta},
  \frac{\mu}{2}\right)\right). \]
  Note in particular that $\mu/\delta \geq \alpha n^2/e_H^{e_H}=
  \Omega(n^{v_I}p^{w_I})$ for
  every subgraph $I\subseteq H$ consisting of two vertices and one edge.
\end{proof}


\section{Proof of Theorem~\ref{thm:main}}

Fix nonempty graphs $F_1,\dotsc,F_r$ with $m_2(F_1)\geq \dotsb \geq m_2(F_r)$ and $m_2(F_2)\geq 1$. For the sake of brevity, we  shall write $H = F_1$ and $F=F_2$.
For each $i \in \{2, \dotsc, r\}$, let $C_i$, $f_i$, and $g_i$ be as given by Lemma~\ref{lem:containers} applied to the graph $F_i$ and to some sufficiently small positive constant $\eps=\eps(r,F_1,\dotsc,F_r)$; let $C := \max{\{C_2,\dotsc,C_r\}}$.

Given a typed graph $\cG$ on $[n]$, we write $\cG\nrightarrow (H,F_2,\dotsc,F_r)$ if there exists a colouring $c \colon E(\cG) \to [r]$ such that there is neither a typed copy of $H$ in colour $1$, nor an (untyped) copy of $F_i$ in colour $i$, for any $i \in \{2, \dotsc, r\}$. Note that if $\cG \nrightarrow (H, F_2, \dotsc, F_r)$, then there also exists such a colouring $c$ where additionally every edge of $\cG$ that is not contained in a typed copy of $H$ has colour $1$. Indeed, we can always recolour such edges in colour $1$ without creating any copies of $H$ in $c^{-1}(1)$.

Now, assume that $\cG \nrightarrow (H, F_2, \dotsc, F_r)$ and let $c \colon E(\cG) \to [r]$ be a colouring that satisfies all of the above conditions. For each $i \in \{2, \dotsc, r\}$, let $S_i := g_i\big(c^{-1}(i)\big)$ and recall that $S_i$ has at most $C n^{2-1/m_2(F)}$ edges. By the definition of $f_i$ and $g_i$, we have $S_i\subseteq c^{-1}(i) \subseteq f_i(S_i)$ for every $i \in \{2, \dotsc, r\}$. Set $\bS := (S_2,\dotsc,S_r)$ and let $\bH_{\bS}$ be the collection of all (untyped) copies of $H$ in the graph $K_n \setminus \big(f_2(S_2) \cup \dotsb \cup f_r(S_r) \big)$. Observe crucially that $\bH_{\bS}(\cG) = \emptyset$, because every typed copy of $H$ in $\bH_{\bS}(\cG)$ would have all of its edges coloured $1$, contradicting the choice of the colouring.

By Lemma~\ref{lem:balance}, we can choose a function $w\colon E(H)\to[1,\infty)$ such that $H$ is $(w,F)$-balanced. In particular, for every edge $e\in E(H)$, we can fix a subgraph $H_e\subseteq H$ containing $e$ such that 
\[
  v_{H_e}-w_{H_e}/m_2(H,F) = 2-1/m_2(F).
\]
Since every edge in $\cG$ of a colour different from $1$ is in a typed copy of $H$ in $\cG$, we can conclude that for every edge $e \in S_2\cup \dotsb \cup S_r$, there is a typed copy of $H_f$ in $\cG$ which contains the edge $e$, where $f = \tau_\cG(e)$. Since $| S_2\cup \dotsb \cup S_r |\leq (r-1)Cn^{2-1/m_2(F)}$, the union of these typed copies is a typed subgraph of $\cG$ with at most $T(n) := e_H(r-1)Cn^{2-1/m_2(F)}$ edges.

Let us now summarise the above discussion. For each $i \in \{2, \dotsc, r\}$,
let $\cS_i$ comprise the family of all sets of the form $g_i(G)$ where $G$ is
an $F_i$-free graph on $[n]$ and let $\cS = \cS_2 \times \dotsb \times \cS_r$.
Moreover, let $\mathfrak W_{n}$ be the collection of all typed graphs $\cW$
with $V(\cW) \subseteq [n]$ and $e(\cW) \le T(n)$ such that every edge $e\in
E(\cW)$ is contained in a typed copy of $H_f$ in $\cW$ for some $f \in E(H)$
(where it is not necessarily the case that $f = \tau_{\cW}(e)$). What we have
shown above can be phrased as follows.

\begin{assert}
  If $\cG$ is a typed graph on $[n]$ such that $\cG \nrightarrow
  (H,F_2,\dotsc,F_r)$, then there exists a sequence $\bS = (S_2,\dotsc,S_r) \in
  \cS$, with $S_2, \dotsc, S_r$ pairwise disjoint, and some $\cW\in \mathfrak
  W_{n}$ such that:
  \begin{enumerate}[label=(\arabic*)]
  \item
    \label{item:wtiness-1}
    $S_2\cup \dotsb \cup S_r \subseteq \cW$ and $\cW\preceq \cG$ and
  \item
    \label{item:witness-2}
    $\bH_{\bS}(\cG) = \emptyset$.
  \end{enumerate}
  We call such a pair $(\bS, \cW)$ a \emph{witness} for the fact that $\cG\nrightarrow (H,F_2,\dotsc,F_r)$.
\end{assert}

Now, suppose that $p \ge Kn^{-1/m_2(H,F)}$ for a sufficiently large constant $K$. Our goal is to prove that
\[
  \Pr\big(G(n,p) \nrightarrow (H, F_2, \dotsc, F_r)\big) = o(1).
\]
As the property $G \nrightarrow (H, F_2, \dotsc, F_r)$ is monotone decreasing in $G$, we
may assume without loss of generality that $p = Kn^{-1/m_2(H,F)}$. Since there
is a natural coupling of $G(n,p)$ and $\cG(n,p,w)$ such that
$E(\cG(n,p,w))\subseteq E(G(n,p))$, we may conclude that
\[ \Pr\big(G(n,p) \nrightarrow (H,F_2,\dotsc,F_r)\big) \leq \Pr\big(\cG(n,p,w)
\nrightarrow (H,F_2,\dotsc,F_r)\big). \]
In particular, it suffices to show that the
probability that $\cG(n,p,w)$ admits a witness $(\bS, \cW)$ for the fact that $\cG(n,p,w)
\nrightarrow (H, F_2, \dotsc, F_r)$ is small. This is what we are going to do in the
remainder of the proof.

Let $\cG\sim \cG(n,p,w)$. For a subgraph $I\subseteq H$, we write $X_I$ for the
number of typed copies of $I$ contained in $\cG$. We shall now split the proof
into two cases, depending on whether or not $X_I$ exceeds $2\Ex[X_I]$ for some
nonempty $I\subseteq H$.

\medskip
\noindent
\textbf{Case 1.} There is a nonempty subgraph $I\subseteq H$ such that $\cG$ has more than $2\Ex[X_I]$ typed copies of $I$.

\smallskip
The probability that $\cG$ is in this case tends to zero by
Lemma~\ref{lem:uppertail}, because $n^{v_I}p^{w_I} \geq Kn^{2-1/m_2(F)}\to
\infty$ for every nonempty $I\subseteq H$, by the definition of
$(w,F)$-balancedness.

\medskip
\noindent
\textbf{Case 2.} For every nonempty subgraph $I\subseteq H$, $\cG$ contains at most $2\Ex[X_I]$ typed copies of $I$.

\smallskip
Let us write $\mathfrak U_{n}\subseteq \mathfrak W_{n}$ for the subset comprising all $\cW\in \mathfrak W_{n}$ that contain at most $2\Ex[X_I]$ typed copies of every nonempty $I\subseteq H$. Note that if $(\bS, \cW)$ is a witness for $\cG \nrightarrow (H, F_2, \dotsc, F_r)$, then necessarily $\cW \in \mathfrak{U}_n$, since otherwise $\cG$ would fall into the first case. Let $Z$ denote the number of witnesses $(\mathbf S,\cW)$ with $\cW\in \mathfrak U_{n}$. We shall show that $\Ex[Z] = o(1)$, which, by Markov's inequality, will imply that the probability that $\cG \nrightarrow (H, F_2, \dotsc, F_r)$ tends to zero.

To this end, we have
\[ 
  \Ex[Z]\leq \sum _{\cW\in \mathfrak U_n} \sum_{\bS \in \cS(\cW)} \Pr\big((\mathbf S,\cW)\text{ is a witness for $\cG \nrightarrow (H,F_2,\dotsc,F_r)$}\big),
\]
where we write $\cS(\cW)$ for the set of all sequences $\bS = (S_2, \dotsc, S_r) \in \cS$ such that
$S_2,\dotsc,S_r$ are pairwise edge-disjoint and
$S_2\cup \dotsb \cup S_r\subseteq \cW$
(in particular, note that $|\cS(\cW)| \leq r^{e(\cW)}$). If a pair $(\bS,\cW)$ is a witness for $\cG \nrightarrow (H,F_2,\dotsc,F_r)$, then $\cW\preceq \cG$ and $\bH_{\bS}(\cG) = \emptyset$. Thus
\[
  \Ex[Z] \leq \sum _{\cW\in \mathfrak U_n}\sum_{\bS \in \cS(\cW)} \Pr \big( \cW\preceq \cG \text{ and } \bH_{\bS}(\cG) = \emptyset \big).
\]

Given a $\cW \in \mathfrak U_n$ and an $\bS \in \cS(\cW)$, let $\bH_{\bS}^{\cW}$ denote the collection of all (untyped) copies of $H$ in $\bH_{\bS}$ that are edge-disjoint from $\cW$. Since $\bH_{\bS}^{\cW}(\cG) \subseteq \bH_{\bS}(\cG)$ and, crucially, the events $\bH_{\bS}^{\cW}(\cG) = \emptyset$ and $\cW \preceq \cG$ are independent, we obtain
\[
  \Ex[Z] \leq \sum _{\cW\in \mathfrak U_n}\sum_{\bS \in \cS(\cW)} \Pr \big(
  \cW\preceq \cG \text{ and } \bH_{\bS}^{\cW}(\cG) = \emptyset \big) = \sum
  _{\cW\in \mathfrak U_n} \Pr\big(\cW\preceq \cG\big) \sum_{\bS \in \cS(\cW)}
  \Pr\big( \bH_{\bS}^{\cW}(\cG) = \emptyset \big). \]

Since each $f_i(S_i)$ contains at most $\eps n^{v_{F_i}}$ copies of $F_i$, it follows from Ramsey's theorem (Lemma~\ref{lem:ramsey}) that if $\eps = \eps(r,H,F_2,\dotsc,F_r)$ is sufficiently small, then $|\bH_{\bS}| \geq 2 \eps n^{v_H}$. Consequently,
\[
  |\bH_{\bS}^{\cW}| \ge |\bH_{\bS}| - e(\cW) \cdot e_H \cdot n^{v_H-2} \ge 2 \eps n^{v_H} - O\big(n^{v_H - 1/m_2(F)}\big) \ge \eps n^{v_H}.
\]
It thus follows from Lemma~\ref{lem:janson} and the fact that $H$ is $(w,F)$-balanced that
\[ 
  \Pr\big(\bH_{\bS}^{\cW}(\cG) = \emptyset\big) \leq \exp\left(-
  c\cdot\min_{\emptyset\neq I\subseteq H}n^{v_{I}}p^{w_{I}}\right) \leq e^{- cK n^{2-1/m_2(F)}},
\]
for some positive constant $c$ that depends only on $H$ and $\eps$. Therefore, 
\begin{equation}
  \label{eq:janson}
  \begin{split}
    \Ex[Z] & \leq \sum _{\cW\in \mathfrak U_n} \Pr \big(\cW\preceq \cG\big)
    \sum_{\bS \in \cS(\cW)} e^{- c K n^{2-1/m_2(F)} }\\
    & \leq \sum _{\cW\in \mathfrak U_n} \Pr \big(\cW\preceq \cG\big)
    \cdot r^{e(\cW)} \cdot e^{- c K n^{2-1/m_2(F)} } \\
    & \leq
    e^{- c K n^{2-1/m_2(F)}/2 }\cdot \sum _{\cW\in \mathfrak U_n} \Pr \big(\cW\preceq \cG\big),
  \end{split}
\end{equation}
where we have used that every $\cW\in \mathfrak U_n$ has at most $T(n)$ edges and thus, for sufficiently large $K$,
\[
  e(\cW)\log r \le T(n)\log r \leq e_H(r-1)Cn^{2-1/m_2(F)}\log r \leq cKn^{2-1/m_2(F)}/2.
\]

It remains to estimate the sum in the right-hand side of~\eqref{eq:janson}. To this end, for a given $k\in\Nat$, let $\mathfrak U_{n,k}$ be the set of all $\cW\in \mathfrak U_n$ that can be written as a union $\cH_1\cup \dotsb \cup \cH_k$ of $k$ typed graphs with $V(\cH_i)\subseteq [n]$, each of which is typomorphic to $H_f$ for some $f\in E(H)$, but not as a union of fewer than $k$ such graphs. Letting $U_k$ count the number of $\cW \in \mathfrak U_{n,k}$ such that $\cW \preceq \cG$, we now have
\begin{equation}\label{eq:bla}
  \sum_{\cW\in \mathfrak U_n} \Pr\big(\cW\preceq \cG\big)
  = \sum_{k \le T(n)} \sum_{\cW\in \mathfrak U_{n,k}} \Pr\big(\cW\preceq \cG\big)
  =  \sum_{k \le T(n)} \Ex[ U_k ].
\end{equation}

\begin{claim}
  There is a constant $c_H$ that depends only on $H$ such that, for every $k$,
  \[
    \Ex[U_k] \le \left( \frac{c_H \cdot K^{w_H} n^{2 - 1/m_2(F)} }{k}\right)^k.
  \]
\end{claim}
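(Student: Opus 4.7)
The plan is to pass from bounding $\Ex[U_k]$ directly to counting ordered decompositions into $k$ typed copies, divide by $k!$, and then iterate a chain-rule factorisation. Observe first that by the minimality clause in the definition of $\mathfrak U_{n,k}$, every decomposition $\cW=\cH_1\cup\dotsb\cup\cH_k$ of a $\cW\in\mathfrak U_{n,k}$ consists of pairwise distinct $\cH_i$'s (otherwise a repeated copy could be dropped, contradicting the minimality of $k$). Hence each $\cW\in\mathfrak U_{n,k}$ with $\cW\preceq\cG$ gives rise to at least $k!$ ordered decompositions. Let $M$ be the number of ordered tuples $(\cH_1,\dotsc,\cH_k)$ such that each $\cH_i$ is a typed copy of some $H_{f_i}$ ($f_i\in E(H)$) in $K_n$, the union $\cH_1\cup\dotsb\cup\cH_k\preceq\cG$, and every partial union $\cW_i:=\cH_1\cup\dotsb\cup\cH_i$ contains at most $2\Ex[X_\cJ]$ typed copies of every nonempty $\cJ\subseteq H$; this cumulative condition is automatic for decompositions of $\cW\in\mathfrak U_{n,k}\subseteq\mathfrak U_n$ since any subgraph contains no more typed copies than the ambient graph. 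Thus $U_k\le M/k!$, and it suffices to show
\[
\Ex[M]\le \big(c'_H K^{w_H}n^{2-1/m_2(F)}\big)^k
\]
for a constant $c'_H$ depending only on $H$; Stirling's inequality $k!\ge(k/e)^k$ then yields the claim with $c_H:=ec'_H$.

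To bound $\Ex[M]$, I would use the independence of the typed-edge indicators in $\cG(n,p,w)$ to factor, for every admissible ordered tuple,
\[
\Pr[\cW_k\preceq\cG]=\prod_{i=1}^k\Pr[\cH_i\preceq\cG\mid\cW_{i-1}\preceq\cG]=\prod_{i=1}^k p^{w_{\cH_i\setminus\cW_{i-1}}}\,e_H^{-e_{\cH_i\setminus\cW_{i-1}}},
\]
and then peel off the factors one at a time. The task reduces to a single-step bound: for every typed graph $\cW$ on $[n]$ with $N_\cJ(\cW)\le 2\Ex[X_\cJ]$ for every nonempty $\cJ\subseteq H$,
\[
\sum_\cH \Pr[\cH\preceq\cG\mid\cW\preceq\cG]\le c'_H K^{w_H}n^{2-1/m_2(F)},
\]
where $N_\cJ(\cG')$ counts typed copies of $\cJ$ in $\cG'$ and $\cH$ ranges over typed copies of the various $H_f$ in $K_n$.

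The main obstacle is this single-step inequality, which I would prove by partitioning the sum according to the typed intersection $\cJ:=\cH\cap\cW\preceq H_f$. For fixed $f\in E(H)$ and typed $\cJ\preceq H_f$, the number of typed copies of $H_f$ in $K_n$ with $\cH\cap\cW=\cJ$ is at most $O\big(N_\cJ(\cW)\cdot n^{v_{H_f}-v_\cJ}\big)$, obtained by first choosing a typed embedding of $\cJ$ into $\cW$ and then extending it to $V(H_f)$ via the $v_{H_f}-v_\cJ$ remaining vertices of $[n]$ (the case $\cJ=\emptyset$ is handled by $N_\emptyset(\cW)=1$ and $n^{v_{H_f}}$ extensions). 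Each such $\cH$ contributes conditional probability $p^{w_{H_f}-w_\cJ}/e_H^{e_{H_f}-e_\cJ}$ (provided types agree on $\cJ$, else zero). Using $N_\cJ(\cW)\le 2\Ex[X_\cJ]=O(n^{v_\cJ}p^{w_\cJ})$ for nonempty $\cJ$, the contribution of each $\cJ$ is $O\big(n^{v_{H_f}}p^{w_{H_f}}\big)$; by the defining identity $v_{H_f}-w_{H_f}/m_2(H,F)=2-1/m_2(F)$ (which is precisely the $(w,F)$-balancedness applied to $e=f$), this equals $O\big(K^{w_{H_f}}n^{2-1/m_2(F)}\big)\le O\big(K^{w_H}n^{2-1/m_2(F)}\big)$. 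Summing over the $O(1)$ subgraphs $\cJ\preceq H_f$ and the $e_H$ choices of $f$ yields the single-step bound, and iterating $k$ times completes the proof.
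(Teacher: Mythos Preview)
Your proof is correct and follows essentially the same approach as the paper. The paper phrases the argument as an induction on $k$, establishing
\[
  k\cdot\Ex[U_k]\le \Ex[U_{k-1}]\cdot e_H 2^{1+v_H+e_H}\,K^{w_H}n^{2-1/m_2(F)},
\]
by observing that removing any one $\cH_i$ from a minimal decomposition of $\cW\in\mathfrak U_{n,k}$ yields $k$ distinct graphs in $\mathfrak U_{n,k-1}$; you instead unroll this induction into a direct count of ordered $k$-tuples and divide by $k!$. The single-step extension bound---partitioning over the typed intersection with the already-built graph, invoking the $2\Ex[X_I]$ hypothesis from $\mathfrak U_n$, and collapsing via the identity $v_{H_f}-w_{H_f}/m_2(H,F)=2-1/m_2(F)$---is identical in both arguments, down to the constant $e_H 2^{1+v_H+e_H}$.
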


\begin{proof}[Proof of Claim]
  Since the only member of $\mathfrak U_{n,0}$ is the empty graph, we have $U_0 = 1$. We now show that, for every $k \ge 1$,
  \begin{equation}
    \label{eq:induction}
    \Ex[U_k] \le \frac{1}{k} \cdot \Ex[U_{k-1}] \cdot e_{H} 2^{1 + v_H + e_H} \cdot K^{w_H} n^{2 - 1/m_2(F)}.
  \end{equation}
   It is easy to see that~\eqref{eq:induction} implies that, for every $k$,
  \[
    \Ex[U_k] \le \frac1{k!} \left( e_H2^{1+v_H+e_H} \cdot K^{w_H} n^{2 - 1/m_2(F)} \right)^k,
  \]
  which in turn implies the assertion of the claim. Thus we only need to prove~\eqref{eq:induction}.
  
  To this end, consider some $\cW \in \mathfrak U_{n,k}$ and let $\cH_1 \cup \dotsb \cup \cH_k$ be some representation of $\cW$ as the union of typed graphs, each of which is typomorphic to some $H_f$.\footnote{Note that this representation is not necessarily unique.} Observe that, for every $i \in [k]$, the typed graph $\cW_i = \bigcup_{j \in [k]\setminus \{i\}} \cH_j$ belongs to $\mathfrak U_{n,k-1}$. Moreover, all these typed graphs are distinct, because every $\cH_i$ contains an edge that is not covered by the union of all the other $\cH_j$ for $j\neq i$. In other words, for every $\cW \in \mathfrak U_{n,k}$, there are at least\footnote{\,``At least'' since, again, $\cW$ can have more than one representation as $\cH_1 \cup \dotsb \cup \cH_k$.} $k$ distinct typed graphs $\cW_i \in \mathfrak U_{n, k - 1}$ such that $\cW = \cW_i \cup \cH_i$ for some typed copy $\cH_i$ of some $H_f$ with $f \in E(H)$.
  Denoting, for each $I \subseteq H$, the set of all typed graphs
  with vertices from $[n]$ that are typomorphic to $I$
  by $\mathfrak C_n(I)$,
  have
  \begin{equation}
    \label{eq:relation}
    k \cdot \Ex[U_k] = k \sum_{\cW \in \mathfrak U_{n,k}} \Pr\big(\cW \preceq \cG\big)
    \le \sum_{\cW' \in \mathfrak U_{n, k-1}} \sum_{f \in E(H)}  \sum_{\cH' \in \mathfrak C(H_f)} \Pr\big(\cW' \cup \cH' \preceq \cG\big),
  \end{equation}
  where we further require that the type function of each $\cH'$ in the last sum agrees with that of $\cW'$ on the intersection $\cW'\cap\cH'$ (otherwise $\cW' \cup \cH'$ would not be a well defined typed graph). Fix arbitrary $\cW' \in \mathfrak{U}_{n,k-1}$ and $f \in E(H)$ and observe that, for every $\cH' \in \mathfrak{C}(H_f)$, we have
  \[
    \Pr\big(\cW' \cup \cH' \preceq \cG\big) = \Pr\big(\cW' \preceq \cG\big) \cdot \Pr\big(\cH' \setminus (\cH' \cap \cW') \preceq \cG\big).
  \]
  By first specifying the intersection $\cI = \cW'\cap \cH'$, which is necessarily a typed copy in $\cW'$ of some $I \subseteq H_f$, we have
  \[
    \sum_{\cH' \in \mathfrak C_n(H_f)} \Pr\big(\cW' \cup \cH' \preceq \cG\big) = 
    \sum_{I\subseteq H_f}
    \sum_{\substack{\cI \in \mathfrak C_n(I)\\
    \cI \preceq \cW'}} \sum_{\substack{\cH' \in \mathfrak C_n(H_f) \\
        \cW' \cap
        \cH' = \cI}} \Pr\big(\cW' \preceq \cG\big) \cdot \Pr\big(\cH'
    \setminus \cI \preceq \cG\big),
  \]
  Because for every $\cH' \in \mathfrak C_n(H_f)$, every $I\subseteq H_f$, and
  every $\cI \in \mathfrak C_n(I)$ with $\cI \preceq \cH'$, we have
  $\Pr\big(\cH' \setminus \cI \preceq \cG\big) = p^{w_{H_f} - w_I}$, and since by the
  definition of $\mathfrak U_n$, there are at most $2\Ex[X_I] \le 2 n^{v_I}
  p^{w_I}$ typed copies of $I$ in $\cW'$, we get
  \[
    \begin{split}
      \sum_{\cH' \in \mathfrak C_n(H_f)} \Pr\big(\cW' \cup \cH' \preceq \cG\big) &\le \Pr\big(\cW' \preceq \cG\big) \cdot \sum_{I \subseteq H_f} 2 n^{v_I} p^{w_I} \cdot n^{v_{H_f} - v_I} \cdot p^{w_{H_f} - w_I} \\
      &\le \Pr\big(\cW' \preceq \cG\big) \cdot 2^{1 + v_{H_f} + e_{H_f}} \cdot n^{v_{H_f}} p^{w_{H_f}}.
    \end{split}
  \]
  Recalling that $v_{H_f} - w_{H_f} / m_2(H,F) = 2 - 1/m_2(F)$ by the definition of $H_f$, our assumption that $p = K n^{-1/m_2(H,F)}$ gives $n^{v_{H_f}} p^{w_{H_f}} = K^{w_{H_f}} n^{2-1/m_2(F)}$. We may thus conclude that
  \[
    \sum_{\cH' \in \mathfrak C_n(H_f)} \Pr\big(\cW' \cup \cH' \preceq \cG\big) \le \Pr\big(\cW' \preceq \cG\big) \cdot 2^{1 + v_{H_f} + e_{H_f}} \cdot K^{w_{H_f}} n^{2 - 1/m_2(F)}.
  \]
  Substituting this bound into~\eqref{eq:relation}, we obtain
  \[
    \begin{split}
      k \cdot \Ex[U_k] & \le \sum_{\cW' \in \mathfrak U_{n, k-1}} \sum_{f \in E(H)} \Pr\big(\cW' \preceq \cG\big)  \cdot 2^{1 + v_{H_f} + e_{H_f}} \cdot K^{w_{H_f}} n^{2 - 1/m_2(F)} \\
      & = \Ex[U_{k-1}] \cdot \sum_{f \in E(H)} 2^{1 + v_{H_f} + e_{H_f}} \cdot K^{w_{H_f}} n^{2 - 1/m_2(F)} \\
      & \le \Ex[U_{k-1}] \cdot e_H 2^{1 + v_H + e_H} \cdot K^{w_H} n^{2 - 1/m_2(F)}.
    \end{split}
  \]
  Dividing through by $k$, we obtain~\eqref{eq:induction}.
\end{proof}

Combining~\eqref{eq:bla} and the claim, we obtain
\[
  \sum_{\cW \in \mathfrak U_n} \Pr\big(\cW \preceq \cG\big) \le \sum_{k \le T(n)} \left( \frac{c_H \cdot K^{w_H} n^{2 - 1/m_2(F)} }{k} \right)^{k} \le (T(n)+1) \cdot \left( \frac{c_H \cdot K^{w_H} n^{2 - 1/m_2(F)}}{T(n)} \right)^{T(n)},
\]
where the last inequality follows from the fact that the function $x \mapsto
(c/x)^x$ is increasing for $0 < x \leq c/e$ and that $T(n) 
= e_H(r-1)Cn^{2-1/m_2(F)}
\leq c_H\cdot K^{w_H}
n^{2-1/m_2(F)}/e$ if $K$ is large. This yields
\[
  \sum_{\cW \in \mathfrak U_n} \Pr\big(\cW \preceq \cG\big)
  \leq 
  \left(\frac{2c_H K^{w_H}}{e_H (r-1) C}\right)^{e_H(r-1)C n^{2-1/m_2(F)}} \leq
  e^{O(\log K) n^{2 - 1/m_2(F)}},
\]
which, together with~\eqref{eq:janson}, implies that $\Ex[Z] \to 0$, provided that $K$ is sufficiently large. This completes the proof.\qed

\section{Concluding remarks}

While, even before our work, the $1$-statement of Conjecture~\ref{conj:kk} was known to be true up to a $\log n$ factor, the situation with the $0$-statement is quite different. So far it has only been verified in the case where all the graphs $F_1, \dotsc, F_r$ are either cycles~\cite{KohKre97} or complete graphs~\cite{marciniszyn2009asymmetric}. The general case, however, seems to be rather difficult. A criterion which reduces the $0$-statement of Conjecture~\ref{conj:kk} to a purely deterministic question, a potentially fruitful approach, was given in~\cite{gugelmann2017symmetric}. We now present this reduction.

Given graphs $F_1$ and $F_2$, let $\bF(F_1, F_2)$ be the family of all graphs $F$ with the following property: There exists a copy $F_2'$ of $F_2$ in $F$ and an edge $e_0 \in E(F_2')$ such that, for each $e \in E(F_2') \setminus \{e_0\}$, there is a copy $F_1^e$ of $F_1$ in $F$ containing $e$ and 
\[
  E(F) = E(F_2') \cup \bigcup_{e \in E(F_2') \setminus \{e_0\}} E(F_1^e);
\]
we shall call such an $e_0$ an \emph{attachment edge}. Note that the graphs
$F_1^e$ need not be disjoint (in fact they are even not required to be
distinct). Intuitively, every graph in $\bF(F_1, F_2)$ is formed from a copy of
$F_2$ by gluing copies of $F_1$ on all of its edges except some edge $e_0$. Let
us call a graph $F \in \bF(F_1, F_2)$ \emph{generic} if every $F_1^e$
intersects $F_2'$ only in the edge $e$ (and no vertices other than the
endpoints of $e$) and its remaining vertices are disjoint from all the other
$F_1^{e'}$ with $e' \neq e$. Note that there
can be up to $e(F_2)\cdot e(F_1)^{e(F_2)-1}$ different generic graphs.

The main property we require from the family $\bF(F_1, F_2)$ is that these
generic graphs are the `sparsest' among all graphs in $\bF(F_1,F_2)$. In
particular, we say that $\bF(F_1, F_2)$ is \emph{asymmetric-balanced} if the
following two conditions are met for every $F \in \bF(F_1, F_2)$ and every $H \subseteq F$ with $V(H) \subsetneq V(F)$ containing an attachment edge:
 \begin{enumerate}[label=(\arabic*)]
 \item
   We have
   \[
     \frac{e(F) - e(H)}{v(F) - v(H)} \ge m_2(F_1, F_2).
   \]
 \item
   Moreover, if
  \[
    \frac{e(F) - e(H)}{v(F) - v(H)} = m_2(F_1, F_2),
  \]
     then $F$ is generic and $H$ contains a single edge (the attachment edge).
\end{enumerate}

\begin{thm}[\cite{gugelmann2017symmetric}]
  \label{thm:0_det}
  Let $F_1$ and $F_2$ be graphs such that $m_2(F_1) \ge m_2(F_2) > 1$ and suppose that the following holds:
  \begin{enumerate}[label=(\roman*)]
    \item $F_1$ and $F_2$ are strictly $2$-balanced,
    \item $F_1$ is strictly balanced w.r.t.\ $m_2(\cdot, F_2)$, 
    \item $\bF(F_1, F_2)$ is asymmetric-balanced, and
    \item for every graph $G$ such that
    \[
      \max_{G' \subseteq G} \frac{e(G')}{v(G')} \le m_2(F_1, F_2)
    \]
    we have $G \nrightarrow (F_1, F_2)$.
  \end{enumerate}
  Then there exists $c > 0$ such that if $p \le cn^{-1/m_2(F_1, F_2)}$, then
  \[
    \lim_{n \to \infty} \Pr\big( G(n,p) \to (F_1, F_2) \big) = 0.
  \]
\end{thm}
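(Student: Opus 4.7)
The plan is to show $G(n,p) \nrightarrow (F_1, F_2)$ w.h.p.\ at $p = cn^{-1/m_2(F_1, F_2)}$ with $c$ sufficiently small, by combining hypothesis (iv) with a deletion argument. At this density, $G(n,p)$ typically contains subgraphs of density strictly exceeding $m_2(F_1, F_2)$ (for example, copies of $F_1$, whose expected count is polynomial in $n$), so (iv) cannot be applied to $G(n,p)$ directly. Instead, I aim to identify a small set $D$ of ``attachment edges'' in $G(n,p)$ such that $G' := G(n,p) \setminus D$ satisfies $\max_{G'' \subseteq G'} e(G'')/v(G'') \le m_2(F_1, F_2)$, and then apply (iv) to $G'$ to get a colouring which I extend to the edges of $D$.

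First I analyse the dense subgraphs of $G(n,p)$. The goal is to show that w.h.p.\ every $J \subseteq G(n,p)$ with $e(J)/v(J) > m_2(F_1, F_2)$ arises as a subgraph of a generic member of $\bF(F_1, F_2)$. This reduction uses the strict $2$-balancedness of $F_1, F_2$ from (i) and the strict balancedness of $F_1$ with respect to $m_2(\cdot, F_2)$ from (ii), via a first-moment computation that makes the exponent on $n$ vanish precisely for these extremal configurations. Asymmetric-balancedness (iii) then yields a ``tightness'' property of generic $F \in \bF(F_1, F_2)$: deleting the attachment edge strictly lowers $e(J')/v(J')$ for every $J' \subseteq F$ that contained it.

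I then let $D$ be the union, over all generic copies of $F \in \bF(F_1, F_2)$ found in $G(n,p)$, of their attachment edges. By the tightness observation, $G' = G(n,p) \setminus D$ has maximum edge-to-vertex ratio at most $m_2(F_1, F_2)$, so (iv) supplies a valid $(F_1, F_2)$-colouring $c$ of $G'$. To extend $c$ to each $e_0 \in D$, I use the fact that $e_0$ lies in an $F_2'$-copy whose remaining edges lie in $G'$, and whose ``outer'' $F_1$-copies (one attached to each non-attachment edge) also live in $G'$. I colour $e_0$ by the (hopefully unique) colour that keeps this $F_2'$-copy bichromatic without completing an all-red copy of $F_1$ through $e_0$.

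The main obstacle is the consistency of this extension. A single edge $e_0 \in D$ may serve as attachment edge of several distinct $\bF(F_1, F_2)$-configurations in $G(n,p)$, and the colour demands imposed by these configurations must be compatible; moreover, colouring $e_0$ might complete a monochromatic $F_1$ or $F_2$ in a configuration not directly responsible for its deletion. Resolving this will require quantitative control on the overlap of $\bF(F_1, F_2)$-configurations in $G(n,p)$ (again using (iii) in a second-moment argument), combined with either a carefully ordered greedy extension or a probabilistic local-lemma-style argument that rules out the bad overlap events with positive probability.
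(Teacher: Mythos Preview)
The paper does not prove Theorem~\ref{thm:0_det}. It is stated in the concluding remarks and attributed to~\cite{gugelmann2017symmetric}; no argument is given here beyond the statement and some surrounding discussion. So there is nothing in this paper to compare your attempt against.

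That said, a few remarks on your sketch. The overall shape---delete a small set of edges so that the remainder satisfies the density hypothesis of~(iv), then extend the resulting colouring---is indeed the standard route for $0$-statements of this kind. But two of your steps are not yet proofs. First, the claim that every $J\subseteq G(n,p)$ with $e(J)/v(J)>m_2(F_1,F_2)$ sits inside a \emph{generic} member of $\bF(F_1,F_2)$ is too strong as stated: dense subgraphs can arise from overlapping configurations, and the structural analysis in~\cite{gugelmann2017symmetric} works instead with an iterative peeling/core argument that uses (i)--(iii) to bound the size of each component of the relevant core, not to force each dense piece into a single generic $F$. Second, you yourself flag the extension step as the ``main obstacle'' and leave it unresolved; in the actual argument this is handled not by a local-lemma device but by showing that after peeling, the residual components have bounded size, so that (iv) (applied to each bounded component, or to the whole residual graph) already yields the colouring with no ad hoc extension needed. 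As written, your proposal is a plausible plan rather than a proof.
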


A minor modification of the proof of Theorem~\ref{thm:0_det} shows that one can further drop the requirement in (i) that $F_1$ is strictly $2$-balanced. Therefore, in order to prove the $0$-statement of Conjecture~\ref{conj:kk}, it is enough to consider a strictly $2$-balanced subgraph $F_2' \subseteq F_2$ with $m_2(F_2') = m_2(F_2)$ and a subgraph $F_1' \subseteq F_1$ that is strictly balanced w.r.t\ $m_2(\cdot, F_2')$ and satisfies $m_2(F_1', F_2') = m_2(F_1, F_2')$, and show that conditions (iii) and (iv) in Theorem~\ref{thm:0_det} hold. As an exercise, we invite the reader to show this in the case where $m_2(F_1) = m_2(F_2)$ (for part (iv) see, e.g., the appendix of~\cite{nenadov2016short}). In this case, it turns out that if $F_1' \subseteq F_1$ is chosen in the manner described above, then it is also strictly $2$-balanced and $m_2(F_1') = m_2(F_2')$; in particular, one can use Theorem~\ref{thm:0_det} without any modifications. Unfortunately, the general case remains wide open.

Finally, let us mention that the proof of Theorem~\ref{thm:main} transfers to
the setting of uniform hypergraphs with almost no changes. However, unlike for
graphs, in the case of hypergraphs of uniformity larger than two, even in the
symmetric case (i.e., $F_1 = \dotsb = F_r$) a complete characterisation of the
threshold functions is not known. We refer the interested reader
to~\cite{gugelmann2017symmetric,nenadov2017algorithmic} for further details.

\bibliography{asymmetric_ramsey}

\begin{thebibliography}{10}

\bibitem{balogh2015independent}
J.~Balogh, R.~Morris, and W.~Samotij.
\newblock Independent sets in hypergraphs.
\newblock {\em J. Amer. Math. Soc.}, 28(3):669--709, 2015.

\bibitem{BalMorSam-ICM}
J.~Balogh, R.~Morris, and W.~Samotij.
\newblock The method of hypergraph containers.
\newblock In {\em Proceedings of the {I}nternational {C}ongress of
  {M}athematicians---{R}io de {J}aneiro 2018. {V}ol. {IV}. {I}nvited lectures},
  pages 3059--3092. World Sci. Publ., Hackensack, NJ, 2018.

\bibitem{conlon2014klr}
D.~Conlon, W.~T. Gowers, W.~Samotij, and M.~Schacht.
\newblock On the {K\L R} conjecture in random graphs.
\newblock {\em Israel J. Math.}, 203(1):535--580, 2014.

\bibitem{FraRod86}
P.~Frankl and V.~R\"odl.
\newblock Large triangle-free subgraphs in graphs without {$K_4$}.
\newblock {\em Graphs Combin.}, 2(2):135--144, 1986.

\bibitem{friedgut2000sharp}
E.~Friedgut and M.~Krivelevich.
\newblock Sharp thresholds for certain {R}amsey properties of random graphs.
\newblock {\em Random Structures Algorithms}, 17(1):1--19, 2000.

\bibitem{friedgut06sharp}
E.~Friedgut, V.~R\"odl, A.~Ruci\'nski, and P.~Tetali.
\newblock A sharp threshold for random graphs with a monochromatic triangle in
  every edge coloring.
\newblock {\em Mem. Amer. Math. Soc.}, 179(845):vi+66, 2006.

\bibitem{gugelmann2017symmetric}
L.~Gugelmann, R.~Nenadov, Y.~Person, N.~\v{S}kori\'c, A.~Steger, and H.~Thomas.
\newblock Symmetric and asymmetric {R}amsey properties in random hypergraphs.
\newblock {\em Forum Math. Sigma}, 5:e28, 47, 2017.

\bibitem{hancock2017independent}
R.~Hancock, K.~Staden, and A.~Treglown.
\newblock Independent sets in hypergraphs and {R}amsey properties of graphs and
  the integers.
\newblock {\em SIAM J. Discrete Math.}, 33(1):153--188, 2019.

\bibitem{janson1998new}
S.~Janson.
\newblock New versions of {S}uen's correlation inequality.
\newblock In {\em Proceedings of the {E}ighth {I}nternational {C}onference
  ``{R}andom {S}tructures and {A}lgorithms'' ({P}oznan, 1997)}, volume~13,
  pages 467--483, 1998.

\bibitem{JanLucRuc90}
S.~Janson, T.~{\L}uczak, and A.~Ruci\'nski.
\newblock An exponential bound for the probability of nonexistence of a
  specified subgraph in a random graph.
\newblock In {\em Random graphs '87 ({P}ozna\'n, 1987)}, pages 73--87. Wiley,
  Chichester, 1990.

\bibitem{JanLucRuc00book}
S.~Janson, T.~{\L}uczak, and A.~Ruci\'nski.
\newblock {\em Random graphs}.
\newblock Wiley-Interscience Series in Discrete Mathematics and Optimization.
  Wiley-Interscience, New York, 2000.

\bibitem{KohKre97}
Y.~Kohayakawa and B.~Kreuter.
\newblock Threshold functions for asymmetric {R}amsey properties involving
  cycles.
\newblock {\em Random Structures Algorithms}, 11(3):245--276, 1997.

\bibitem{kohayakawa1997klr}
Y.~Kohayakawa, T.~{\L}uczak, and V.~R\"odl.
\newblock On {$K^4$}-free subgraphs of random graphs.
\newblock {\em Combinatorica}, 17(2):173--213, 1997.

\bibitem{kohayakawa2014upper}
Y.~Kohayakawa, M.~Schacht, and R.~Sp\"ohel.
\newblock Upper bounds on probability thresholds for asymmetric {R}amsey
  properties.
\newblock {\em Random Structures Algorithms}, 44(1):1--28, 2014.

\bibitem{luczak1992ramsey}
T.~{\L}uczak, A.~Ruci\'nski, and B.~Voigt.
\newblock Ramsey properties of random graphs.
\newblock {\em J. Combin. Theory Ser. B}, 56(1):55--68, 1992.

\bibitem{marciniszyn2009asymmetric}
M.~Marciniszyn, J.~Skokan, R.~Sp\"ohel, and A.~Steger.
\newblock Asymmetric {R}amsey properties of random graphs involving cliques.
\newblock {\em Random Structures Algorithms}, 34(4):419--453, 2009.

\bibitem{nenadov2017algorithmic}
R.~Nenadov, Y.~Person, N.~{\v{S}}kori{\'c}, and A.~Steger.
\newblock An algorithmic framework for obtaining lower bounds for random
  {R}amsey problems.
\newblock {\em J. Combin. Theory Ser. B}, 124:1--38, 2017.

\bibitem{nenadov2016short}
R.~Nenadov and A.~Steger.
\newblock A short proof of the random {R}amsey theorem.
\newblock {\em Combin. Probab. Comput.}, 25(1):130--144, 2016.

\bibitem{rodl1993lower}
V.~R\"odl and A.~Ruci\'nski.
\newblock Lower bounds on probability thresholds for {R}amsey properties.
\newblock In {\em Combinatorics, {P}aul {E}rd\H os is eighty, {V}ol.\ 1},
  Bolyai Soc. Math. Stud., pages 317--346. J\'anos Bolyai Math. Soc., Budapest,
  1993.

\bibitem{rodl1994random}
V.~R\"odl and A.~Ruci\'nski.
\newblock Random graphs with monochromatic triangles in every edge coloring.
\newblock {\em Random Structures Algorithms}, 5(2):253--270, 1994.

\bibitem{rodl1995threshold}
V.~R\"odl and A.~Ruci\'nski.
\newblock Threshold functions for {R}amsey properties.
\newblock {\em J. Amer. Math. Soc.}, 8(4):917--942, 1995.

\bibitem{RuVi85}
A.~Ruci\'nski and A.~Vince.
\newblock Balanced graphs and the problem of subgraphs of random graphs.
\newblock In {\em Proceedings of the sixteenth {S}outheastern international
  conference on combinatorics, graph theory and computing ({B}oca {R}aton,
  {F}la., 1985)}, volume~49, pages 181--190, 1985.

\bibitem{saxton2015hypergraph}
D.~Saxton and A.~Thomason.
\newblock Hypergraph containers.
\newblock {\em Invent. Math.}, 201(3):925--992, 2015.

\bibitem{schacht2018sharp}
M.~Schacht and F.~Schulenburg.
\newblock Sharp thresholds for {R}amsey properties of strictly balanced nearly
  bipartite graphs.
\newblock {\em Random Structures Algorithms}, 52(1):3--40, 2018.

\end{thebibliography}
\bibliographystyle{abbrv}

\end{document}